\newtheorem{theorem}{Theorem}[section]
\newtheorem{corr}{Corollary}[section]
\newtheorem{prop}{Proposition}[section]
\newtheorem{remark}{Remark}[section]
\newtheorem{df}{Definition}[section]
\newcommand{\conv}{\mathop{\rm conv}\nolimits}
\newcommand{\but}{\mathop{\rm BUT}\nolimits}
\def\text{{\rm }}
\def\stm{{\setminus}}
\def\conv{{\rm conv}}
\def\co-ind{{\operatorname{co-ind}}}
\def\tind{{\operatorname{t-ind\,}}}
\def\hind{{\operatorname{h-ind\,}}}
\def\varnothing{\emptyset}
\let\to=\rightarrow
\title {Borsuk--Ulam Type spaces}
\author {Oleg R. Musin\thanks{The research was carried out at the IITP RAS at the expense of the Russian Foundation for Sciences (project № 14-50-00150)} \; and  Alexey Yu. Volovikov}
\begin{document}
\date{}
\maketitle

\begin{abstract} We consider spaces with free involutions that satisfy the Borsuk--Ulam theorems (BUT--spaces). There are several equivalent definitions for BUT--spaces that can be considered as their properties. Our main technical tool is Yang's cohomological index.
\end{abstract}

\medskip

\noindent {\bf Keywords:} Borsuk -- Ulam theorem, Tucker's lemma, Yang's index

\section{Introduction}

Throughout this paper the symbol ${\mathbb R}^n$ denotes the Euclidean space of dimension $n$.  We denote by  ${\mathbb S}^n$ the $n$-dimensional sphere. If we consider ${\mathbb S}^n$ as the set of unit vectors $x$ in ${\mathbb R}^{n+1}$, then points $x$ and $-x$ are called {\it antipodal} and the symmetry given by the mapping $A:{\mathbb S}^n \to {\mathbb S}^n$, where
 $A(-x)=x$.

The classical Borsuk--Ulam theorem states:

\medskip

\noindent{\bf Theorem (Borsuk \cite{Borsuk}).} {\it For any continuous mapping $f:{\Bbb S}^n \to {\Bbb R}^n$ there is a point $x\in{\Bbb S}^n$ such that $f(-x)=f(x)$.}

\medskip

One of statements in the Borsuk paper was about coverings of a sphere by closed sets. Note that this result was published by Lusternik and Schnirelmann  in 1930 \cite{LS}, i. e. three years before Borsuk.

\medskip

\noindent{\bf Theorem (Lusternik--Schnirelmann).} {\it ${\Bbb S}^n$ cannot
be covered by $n+1$ closed sets, none containing a pair $(x,-x)$ of diametrically
opposite (antipodal) points.}

\medskip

Since both theorems are true, they are, of course, logically equivalent. But
if their hypotheses are suitably weakened, the resulting statements can be
shown to be equivalent in a more interesting sense. Bacon \cite{Bacon} proved that these two theorems  are equivalent for a very wide class of spaces, see Theorem \ref{tBUTS} in Section 2. Actually, there are a lot of statements that are equivalent to the Borsuk--Ulam theorem \cite{Bacon,KyFan,Mat,Mus, MusSpT,MusK,MusArSp,MusS,MusVo,Ste,ShashkinT,Shashkin99,Tucker,Yang54}.

Let  $X$ be a topological space with a free continuous involution $A:X\to X$.
We say that a pair $(X,A)$ is a {\it BUT (Borsuk-Ulam Type) -- space} if for any continuous mapping  $f:X \to {\Bbb R}^n$ there is a point $x\in X$ such that $f(A(x))=f(x)$.

 Bacon's theorem show that BUT--spaces are also Lusternik--Shnirelman  spaces as well as Tucker and Yang spaces. In Section 2 we consider discrete analogs of the Borsuk--Ulam theorem  \cite{KyFan,Mat,MusSpT,Tucker} and Kakutani type theorem \cite{MusK}. We prove, see Theorems \ref{DBUT} and \ref{KBUT}, that in the case of finite simplicial complexes these theorems are hold for
BUT--spaces.

In Section 3 we consider Yang's cohomological index. It is shown that using this index we can give necessary and sufficient conditions for a space to be BUT, see Theorem \ref{tEQUIV}, Corollaries 3.1 and 3.2.
In Sections 4 and 5 we apply the results of Section 3 for generalizations of several known results for spheres and discs to BUT--spaces and BUT--manifolds.

\section{Equivalent formulations of the Borsuk--Ulam \\ theorem}

We start from the theorem that was proved in this form by Bacon \cite{Bacon}.

\begin{theorem} \label{tBUTS} Let $X$ be a normal topological space with a free continuous involution $A:X\to X$. Then the following statements are equivalent:
\begin{enumerate}

\item $(X,A)$ is a $\but_n$--space (Borsuk--Ulam type space), i. e.,  for any continuous mapping $f:X\to {\Bbb R}^n$ there is $x\in X$ such that $f(A(x)=f(x)$.
\item $(X,A)$ is a $LS_n$--space (Lusternik--Shnirelman  space), i. e.  for
any cover $C_1,\ldots,C_{n+1}$ of  $X$ by $n+1$ closed (respectively,  by  $n+1$ open) sets, there is at least one set containing a pair $(x,A(x))$.
\item $(X,A)$ is a $T_n$--space (Tucker space), i. e. for any covering of $X$ by  a family of $2n$ closed (respectively, of  $2n$ open) sets $\{C_1,C_{-1},\ldots,C_n,C_{-n}\}$, where $C_i$ and $C_{-i}$ are antipodal, i. e., $C_{-i}=A(C_i)$, for all $i=1,\ldots,n$, there is $k$ such that $C_k$ and $C_{-k}$ have a common intersection point.
\item $(X,A)$ is a $TB_n$--space (Tucker--Bacon space), i. e., if each of $C_1,C_2,\ldots, C_{n+2}$ is a closed subset of $X$,
$$
\bigcup\limits_{i=1}^{n+2}{C_i}=X, \quad \bigcup\limits_{i=1}^{n+2}{(C_i\cap A(C_i))}=\emptyset,
$$
and $j\in\{1,2,\ldots,n+1\}$, then there is a point $p$ in $X$ such that
$$
p\in\bigcap\limits_{i=1}^{j}{C_i} \; \mbox{ and } \; A(p)\in\bigcap\limits_{i=j+1}^{n+2}{C_i}.
$$
\item $(X,A)$ is an $Y_n$--space (Yang space).

$Y_n$ can be define  recursively \cite[p. 493]{Bacon}: $Y_0$ contains all $(X,A)$, $(X,A)\in Y_n$ if   a closed subset $F$ in $X$ is such that $F\cup A(F)=X$, then $F\cap A(F)$ is an $Y_{n-1}$--space.

\end{enumerate}
\end{theorem}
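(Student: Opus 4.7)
My approach is to prove the theorem as a web of equivalences centered on $(1)$, establishing $(1) \Leftrightarrow (2)$, $(1) \Leftrightarrow (3)$, $(1) \Leftrightarrow (5)$, and $(1) \Leftrightarrow (4)$ separately. The common thread is that normality of $X$ lets us convert combinatorial covering data into continuous equivariant real-valued functions via Urysohn's lemma and the Tietze extension theorem, while conversely any failure of the BUT property yields a nonvanishing odd map $g = f - f \circ A$ whose coordinate sign-sets furnish combinatorial covers. The closed and open forms of $(2)$ and $(3)$ are equivalent via a standard shrinking argument using normality, so we may work with whichever is convenient in each direction.

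For $(1) \Rightarrow (2)$, given a closed cover $C_1, \ldots, C_{n+1}$ with $C_i \cap A(C_i) = \emptyset$, Urysohn yields $\psi_i : X \to [0,1]$ with $\psi_i = 0$ on $C_i$ and $\psi_i = 1$ on $A(C_i)$ for $i \le n$; a coincidence point $x^*$ of the map $(\psi_1 - \psi_1 \circ A, \ldots, \psi_n - \psi_n \circ A) : X \to \rr^n$ satisfies $\psi_i(x^*) = \psi_i(A x^*)$ for every $i \le n$, which rules out $x^* \in C_j$ for $j \le n$ and forces $x^*, A x^* \in C_{n+1}$, contradicting the hypothesis on $C_{n+1}$. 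For $(2) \Rightarrow (1)$, a coincidence-free $f$ yields an equivariant $h = g/\|g\| : X \to \mathbb{S}^{n-1}$, and the pullback of the standard cover of $\mathbb{S}^{n-1}$ by $n+1$ closed sets with no antipodal pairs (e.g.\ radial projections of the faces of a regular inscribed $(n{+}1)$-simplex) violates $(2)$. The equivalence $(1) \Leftrightarrow (3)$ is parallel: for $(3) \Rightarrow (1)$ the $2n$ open sets $U_{\pm i} := \{\pm g_i > 0\}$ form an antipodal Tucker cover with $U_i \cap U_{-i} = \emptyset$; for $(1) \Rightarrow (3)$, odd Urysohn functions $\alpha_i$ with $\alpha_i|_{C_i} = -1$ and $\alpha_i|_{C_{-i}} = 1$ turn a BUT-coincidence into the contradictory statement that $\alpha_i(x^*) = 0$ for all $i$ yet $|\alpha_j(x^*)| = 1$ at whichever $C_{\pm j}$ contains $x^*$.

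The equivalence $(1) \Leftrightarrow (5)$ proceeds by induction on $n$, reading $Y_0$ as the class of nonempty $(X,A)$ so that the recursion carries content; the base case is immediate since $\but_0$ also amounts to nonemptiness. For $(5) \Rightarrow (1)$, set $F := \{x : f_n(x) \ge f_n(A x)\}$: it is closed, satisfies $F \cup A(F) = X$, and has $E := F \cap A(F) = \{x : f_n(x) = f_n(A x)\}$ lying in $Y_{n-1}$ by $(5)$, hence in $\but_{n-1}$ by induction, so the inductive hypothesis applied to $(f_1, \ldots, f_{n-1})|_E$ supplies the desired coincidence. For $(1) \Rightarrow (5)$, given any $g : E \to \rr^{n-1}$ one extends equivariantly to $\tilde g : X \to \rr^{n-1}$ via Tietze and appends an odd last coordinate $\phi : X \to \rr$ whose coincidence set $\{x : \phi(x) = \phi(Ax)\}$ is exactly $E$; applying $(1)$ to $(\tilde g, \phi)$ delivers a coincidence point forced into $E$. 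Finally, $(1) \Leftrightarrow (4)$ follows Bacon's original argument as a refinement of $(1) \Leftrightarrow (2)$, in which the Tucker--Bacon index $j$ is encoded into the choice of Urysohn functions.

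The main technical obstacle is the construction in $(1) \Rightarrow (5)$ of an odd $\phi$ with coincidence locus exactly $E = F \cap A(F)$. For a metric space with $A$ an isometry the natural choice $\phi := d(\cdot, F) - d(\cdot, A(F))$ works immediately (each $x \in F$ has $\phi(x) = 0$ iff $x \in A(F)$, and symmetrically on $A(F)$), but under mere normality one has to combine two Tietze extensions: first build a continuous $\chi : X \to [0,1]$ vanishing on $E$ and strictly positive on $F \setminus E$, then set $\phi := \chi - \chi \circ A$. The first step requires $E$ to be a zero set of a continuous function, which is automatic under perfect normality; for general normal $X$ the hypothesis must either be strengthened or this step handled by a delicate separation argument.
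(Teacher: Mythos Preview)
The paper does not actually prove this theorem: it simply attributes the result to Bacon \cite{Bacon} and states it. So there is no in-paper argument to compare against, and your outline is already far more detailed than what the paper offers.

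That said, your sketch has two soft spots worth addressing. First, the gap you flag in $(1)\Rightarrow(5)$ is not a genuine obstruction, and you do not need perfect normality or any ``delicate separation argument''. Once you have a Tietze extension $\tilde g:X\to\rr^{n-1}$ of a coincidence-free $g:E\to\rr^{n-1}$, continuity gives an $A$-invariant open neighbourhood $V\supseteq E$ on which $\tilde g(x)\ne\tilde g(Ax)$ still holds. The closed set $X\setminus V$ lies inside the disjoint union of the open sets $U=X\setminus A(F)$ and $A(U)=X\setminus F$, so $C:=(X\setminus V)\cap U$ and $A(C)$ are disjoint \emph{closed} subsets of $X$. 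Now an ordinary Urysohn function separating $C$ from $A(C)$, antisymmetrised to make it odd, gives $\phi$ with $\phi(x)\ne\phi(Ax)$ on all of $X\setminus V$; on $V$ the first $n-1$ coordinates already separate. Hence $(\tilde g,\phi):X\to\rr^n$ is coincidence-free, contradicting $\but_n$. No zero-set hypothesis on $E$ is needed, because you never needed $\phi^{-1}(0)=E$ exactly---only that $\phi$ distinguishes $x$ from $Ax$ outside some neighbourhood of $E$.

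Second, your treatment of $(1)\Leftrightarrow(4)$ is not a proof but a pointer (``follows Bacon's original argument''). Since the paper itself only cites Bacon, this is no worse than the paper, but if you want a self-contained argument you should at least indicate how the index $j$ enters: for $(4)\Rightarrow(2)$ one takes $j=1$ and observes that the $TB_n$ conclusion forces $C_1\cap A(C_1)\ne\emptyset$; for $(1)\Rightarrow(4)$ one builds, from a putative counterexample $C_1,\dots,C_{n+2}$ and a fixed $j$, a coincidence-free map into $\rr^n$ by a Urysohn construction keyed to the partition $\{1,\dots,j\}$ versus $\{j+1,\dots,n+2\}$.
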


\noindent{\bf Remark 1.}  Two facts that ${\Bbb S}^2\in T_2$ and ${\Bbb S}^2\in TB_2$ were first proved by Tucker in 1945 \cite{Tucker}. The class of $Y_n$--spaces was considered by Yang \cite{Yang54}.

\medskip

\noindent{\bf Remark 2.} In our paper \cite{Mus} we also mentioned  the Lusternik--Shnirelman category cat$(Y)$ of a space $Y$,  i. e. the smallest $m$ such that there exists an open covering $U_1,\ldots,U_{m+1}$ of $Y$ with each $U_i$ contractible to a point in $Y$. It is not hard to prove that\\
 {\it If $(X,A)$ is a BUT$_n$-space, then}  cat${(X/A)}\ge$ cat${({\Bbb R}{\Bbb P}^n)}=n$.\\ However, the converse is not true, there are manifolds $(X,A)$ of dimension $n$  with cat$(X/A)=n$ that are not BUT$_n$-spaces.

\medskip
\begin{df} We say that $X$ is a $\but$-space if it is a $\but_n$-space with $n=\dim X$.
\end{df}

Now we consider discrete analogs of the Borsuk--Ulam theorem. We assume that $X$ is a finite simplicial complex and $A:X\to X$ is a free simplicial involution. It is clear, that $X$ is also a topological normal space.

For a triangulation ${\cal T}$ of $X$ denote by $V({\cal T})$ the vertex set of the triangulation ${\cal T}$.
Consider antipodal (equivariant) triangulations ${\cal T}$ of $X$. That means that for any simplex $s$ from ${\cal T}$ its image $A(s)$ is also a simplex in ${\cal T}$
and $s\cap As=\emptyset$. Therefore, $A:V({\cal T})\to V({\cal T})$ is a well defined free involution on the vertex set.

Put $\Pi_m=\{+1,-1,+2,-2,\ldots, +m,-m\}$. Change of sign provides a free involution on $\Pi_m$. A $\Pi_m$-labeling (or $\{\pm 1,\ldots, \pm m\}$-labeling) is a map $L:V({\cal T})\to \Pi_m$. An equivariant (antipodal) labeling $L:V({\cal T})\to \Pi_m$ of an equivariant (antipodal) triangulation ${\cal T}$ of $X$ is a labeling $L$ such that $L(Av)=-L(v)$, $v\in V({\cal T})$.

\begin{theorem} \label{DBUT} Let $X$ be a finite simplicial complex with a free simplicial involution $A:X\to X$. Then the following statements are equivalent:
\begin{enumerate}

\item $(X,A)$ is a $\but_n$--space.

\item  $(X,A)$ is a $\tilde T_n$--space (Tucker space), i. e. for any antipodal triangulation ${\cal T}$ of $X$ and an {antipodal} labeling of the vertices of ${\cal T}$
$$L:V({\cal T})\to \{+1,-1,+2,-2,\ldots, +n,-n\}$$
 there exists a {complementary} edge.

(A labeling $L$ is called {\bf antipodal} if $L(A(v))=-L(v)$ for every vertex $v\in V({\cal T})$.  An edge $[u,v]$ in $T$ is {\bf complementary} if $L(u)=-L(v)$.)

\item  $(X,A)$ is an $F_n$--space (Ky Fan space), i. e. for any antipodal triangulation ${\cal T}$ of $X$ and any antipodal labeling $L:V({\cal T})\to \{+1,-1,+2,-2,\ldots, +m,-m\}$ without   complementary edges  there is an  $n$-simplex in ${\cal T}$ with labels in the form $\{k_0,-k_1,k_2,\ldots,(-1)^nk_n\}$, where  $1\le k_0<k_1<\ldots<k_n\le m$.

\item  $(X,A)$ is a $P_n$--space, i. e.  for any  centrally symmetric set of $2m$ points $\{p_1,p_{-1},\ldots,p_m,p_{-m}\},\, p_{-k}=-p_k,$ in ${\Bbb R}^n$, for any antipodal triangulation ${\cal T}$ of $X$, and for any  antipodal  labeling  $L:V({\cal T})\to \{+1,-1,+2,-2,\ldots, +m,-m\}$ there exists a $k$-simplex $s$ in ${\cal T}$ with labels $\ell_0,\ldots,\ell_ k$ such that the convex hull of points  $\{p_{\ell_0},\ldots,p_{\ell_k}\}\subset{\Bbb R}^n$ contains the origin $0$ of ${\Bbb R}^n$.

\end{enumerate}
\end{theorem}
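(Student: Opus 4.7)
My plan is to close the equivalences via the cycle $(1)\Rightarrow(4)\Rightarrow(2)\Rightarrow(1)$ together with a separate loop $(1)\Rightarrow(3)\Rightarrow(2)$. The implications into $(2)$, and the implication $(2)\Rightarrow(1)$, are variants of standard Tucker-type arguments; the linear-extension trick handles $(1)\Rightarrow(4)$; the genuinely nontrivial content is $(1)\Rightarrow(3)$, a Ky Fan theorem in the BUT category.

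For $(1)\Rightarrow(4)$ I would use the standard linear model of the labeling. Given an antipodal labeling $L\colon V(\mathcal T)\to\Pi_m$ and centrally symmetric points $p_{\pm1},\dots,p_{\pm m}\in\mathbb R^n$, put $f(v)=p_{L(v)}$ on vertices and extend affinely across each simplex. Antipodality of $L$ together with $p_{-k}=-p_k$ forces $f\circ A=-f$, so $\but_n$ supplies $x_0\in X$ with $f(x_0)=f(A(x_0))=-f(x_0)$, hence $f(x_0)=0$. Expanding $x_0$ in barycentric coordinates on its carrier simplex $[v_0,\dots,v_k]$ exhibits the origin as a convex combination of $p_{L(v_0)},\dots,p_{L(v_k)}$, which is $(4)$.

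The implications $(4)\Rightarrow(2)$ and $(3)\Rightarrow(2)$ are immediate: specializing $p_k=e_k$ in $(4)$ turns the convex-hull conclusion into a pair $\pm k$ of opposite labels on adjacent vertices, i.e.\ a complementary edge; in $(3)$, applying $F_n$ with $m=n$ would demand $n+1$ distinct values in $\{1,\dots,n\}$, which is impossible, so the antipodal labeling must already carry a complementary edge. For $(2)\Rightarrow(1)$ I would run the familiar refinement argument: given continuous $f\colon X\to\mathbb R^n$ with $g(x)=f(x)-f(A(x))$ nowhere zero, take $\mathcal T$ so fine that the oscillation of $g$ on every edge is strictly less than $\min_X|g|$; labeling a vertex by the signed index of a coordinate of $g(v)$ of maximal absolute value gives an antipodal labeling with no complementary edge, contradicting $\tilde T_n$.

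The core step is $(1)\Rightarrow(3)$, Ky Fan's theorem for arbitrary $\but_n$-spaces. The classical sphere case proceeds by induction on $n$ and a parity count of alternating $n$-simplices; to carry this out in the $\but_n$ setting I would induct on $n$ via the recursive Yang description of item~(5) of Theorem~\ref{tBUTS}, which identifies $\but_n$ with the class $Y_n$. Given an antipodal labeling $L\colon V(\mathcal T)\to\Pi_m$ without complementary edges, the subcomplex $F\subset X$ generated by vertices of positive maximal label satisfies $F\cup A(F)=X$, and the intersection $F\cap A(F)$ inherits a $\Pi_{m-1}$-labeling without complementary edges from $L$ after collapsing the largest absolute value. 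The induction hypothesis produces an alternating $(n-1)$-simplex in $F\cap A(F)$, and the corresponding coned-off $n$-simplex in $F$ realizes the desired Ky Fan pattern in $X$. The main obstacle will be verifying that the subcomplex $F$ can be chosen so that both the involution structure of the intersection and the sign conventions of the inherited labeling align with the inductive hypothesis; this is where the assumption that $X$ is a finite simplicial complex with a simplicial free involution is used critically.
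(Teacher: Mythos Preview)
Your cycle $(1)\Rightarrow(4)\Rightarrow(2)\Rightarrow(1)$ and the implication $(3)\Rightarrow(2)$ are correct and essentially match what the paper does (or cites): the linear extension for $(1)\Rightarrow(4)$ is exactly the content of \cite[Theorem~4.2]{MusSpT}, the cross-polytope specialization for $(4)\Rightarrow(2)$ is the paper's argument, and your $(2)\Rightarrow(1)$ refinement argument is the standard one from \cite[Theorem~2.3.2]{Mat}.

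The genuine gap is your $(1)\Rightarrow(3)$ via the Yang recursion. The $Y_n$ description guarantees that $F\cap A(F)$ is a $\but_{n-1}$-space whenever $F$ is closed with $F\cup A(F)=X$, but it gives you no control over the \emph{simplicial} structure of $F\cap A(F)$: that intersection need not be a subcomplex of $\mathcal T$, need not inherit a free simplicial involution, and need not carry a $\Pi_{m-1}$-labeling obtained by ``collapsing the largest absolute value'' in any meaningful way. Even granting all of that, an alternating $(n-1)$-simplex in $F\cap A(F)$ does not obviously extend to an alternating $n$-simplex in $X$: you would need a vertex adjacent to the entire $(n-1)$-simplex whose label fits the alternating pattern at one end, and nothing in your definition of $F$ produces such a vertex. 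You yourself flag this as ``the main obstacle,'' and it is a real one; the Yang recursion is well suited to existence statements (some coincidence must occur) but not to the structured output Ky Fan demands.

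The paper sidesteps this entirely. Rather than proving $(1)\Rightarrow(3)$ directly, it routes through $(4)$: once you have $(1)\Rightarrow(4)$ --- which you already proved --- you obtain $(4)\Rightarrow(3)$ by choosing the points $p_i$ to be the vertices of the $\mathrm{ASC}(m,n)$ polytope of \cite[Theorem~5.2]{MusSpT}. That polytope is engineered so that the only label sets $\{\ell_0,\dots,\ell_k\}$ for which $0\in\conv\{p_{\ell_0},\dots,p_{\ell_k}\}$ are precisely the Ky Fan alternating patterns (or sets containing a complementary pair, which are excluded by hypothesis). So the Ky Fan conclusion drops out of $(4)$ for free, with no induction and no recursive dissection of $X$. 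This is both shorter and avoids the structural problems in your inductive scheme; you should replace your $(1)\Rightarrow(3)$ argument with this polytope specialization of $(4)$.
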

\begin{proof} The fact that ${\Bbb S}^2\in \tilde T_2$ was first proved by Tucker \cite{Tucker}.  Using the same arguments as in \cite[Theorem 2.3.2]{Mat}, it can be proved that statements 1 and 2 are equivalent. It is also follows from the equivalence of statements 1 and 3 in Theorem \ref{tBUTS}.

The equivalence of statements 1 and 3 directly follows from a proof of the Ky Fan lemma in \cite{KyFan}, see also \cite[Theorem 5.3]{MusSpT}.

In fact, in our paper \cite{MusSpT} it is proved that if $(X,A)\in P_n$ then $(X,A)\in \tilde T_n$ and $(X,A)\in F_n$. Namely, to prove that $(X,A)\in \tilde T_n$ we can take as points $p_i$ vertices of a regular cross polytope in ${\Bbb R}^n$, and for $(X,A)\in F_n$ these points can be chosen as vertices of ASC $(m,n)$ polytope (see, \cite[Theorem 5.2]{MusSpT}). The equivalence of statements 1 and 4 follows from \cite[Theorem 4.2]{MusSpT}.
\end{proof}

For a set $S$ denote by $2^S$ the set of all subsets of $S$, including the empty set and $S$ itself. Let $X$ and $Y$ be topological spaces. We will say that $F:X\to 2^Y$ is a {\it set-valued function on $X$ with a closed graph} if graph$(F)=\{(x,y)\in X\times Y: y\in F(x)\}$ is a closed as a subset of $X\times Y$. Such a set-valued mapping is a u.s.c. mapping (upper semi continuous mapping).

\begin{theorem} \label{KBUT} Let $X$ be a compact space
with a free involution $A:X\to X$. Then $(X,A)$ is a $\but_n$--space if and only if for any set--valued function $F:X\to 2^{{\Bbb R}^n}$  with a closed graph  such that for all $x\in X$ the set $F(x)$ is non-empty compact convex subspace of ${\Bbb R}^n$ and  contains $y$ with $(-y)\in F(A(x))$, there is $x_0\in X$ such that  $F(x_0)$ covers the origin $0\in{\Bbb R}^n$.
\end{theorem}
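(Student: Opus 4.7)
The proof naturally splits into two implications.

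\textbf{($\Leftarrow$)} This direction is easy. Given a continuous $f:X\to\mathbb{R}^n$, I would set
\[
F(x) := \{\,f(x) - f(A(x))\,\}.
\]
Its graph is closed, its values are compact convex singletons, and the antipodal condition holds because $-(f(x)-f(A(x))) = f(A(x))-f(x) \in F(A(x))$. The hypothesis yields $x_0$ with $0\in F(x_0)$, hence $f(x_0)=f(A(x_0))$.

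\textbf{($\Rightarrow$)} Assume $(X,A)$ is a $\but_n$--space and $F$ is as in the statement. The first step is to antipodalize $F$: set
\[
G(x) := F(x) \cap \bigl(-F(A(x))\bigr).
\]
The hypothesis on $F$ says precisely that $G(x)\neq\varnothing$; moreover $G(x)$ is compact and convex, its graph is closed (it is the intersection of graph$(F)$ with the preimage of graph$(F)$ under the continuous map $(x,y)\mapsto(A(x),-y)$), and by construction $G(A(x))=-G(x)$. It suffices to show $0\in G(x_0)$ for some $x_0$, since $G\subset F$.

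\textbf{Approximation step.} Using Cellina's approximation theorem, for each $\eps>0$ choose a continuous single-valued $h_\eps:X\to\mathbb{R}^n$ whose graph lies in the $\eps$-neighborhood of graph$(G)$. Antisymmetrize by
\[
g_\eps(x) := \tfrac12\bigl(h_\eps(x) - h_\eps(A(x))\bigr),
\]
so that $g_\eps\circ A = -g_\eps$. The key convexity observation is that $h_\eps(x)$ is near $G(x)$ while $-h_\eps(A(x))$ is near $-G(A(x))=G(x)$, so by convexity of $G(x)$ the midpoint $g_\eps(x)$ still lies in a small neighborhood of $G(x)$. Applying the $\but_n$ property of $(X,A)$ to $g_\eps$ gives $x_\eps$ with $g_\eps(x_\eps)=g_\eps(A(x_\eps))=-g_\eps(x_\eps)$, i.e.\ $g_\eps(x_\eps)=0$. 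Thus $0$ lies within distance $O(\eps)$ of $G(x_\eps)$. By compactness of $X$, pass to a convergent subsequence $x_\eps\to x_0$; the closed graph of $G$ then yields $0\in G(x_0)\subset F(x_0)$.

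\textbf{Main obstacle.} The delicate point is the equivariant graph approximation: one must verify that Cellina's approximation can be combined with the trivial antisymmetrization $h\mapsto\tfrac12(h - h\circ A)$ without destroying closeness to $G$. This is where the convexity of the values of $F$ (inherited by $G$) is essential, and where the assumption that $X$ is (nice enough, e.g.\ compact metric) is used to invoke the approximation theorem and the sequential compactness argument in the final limit.
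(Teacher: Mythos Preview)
Your argument is correct for compact \emph{metric} $X$, and it takes a genuinely different route from the paper's. The paper never invokes Cellina's theorem. Instead, for $X$ a finite simplicial complex it takes a sequence of antipodal barycentric subdivisions $\mathcal T_i$, \emph{chooses} at each vertex $v$ a value $f_i(v)\in F(v)$ with $-f_i(v)\in F(Av)$ (the hypothesis guarantees such a choice), extends piecewise-linearly to an antipodal $f_i:X\to\mathbb R^n$, applies $\but_n$ to get a zero $x_i$ lying in a simplex with vertices $v_k^i$, and then passes to the limit: the $f_i(v_k^i)\in F(v_k^i)$ converge (closed graph) to points $y_k\in F(x_0)$ whose convex hull contains $0$. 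Your antipodalization $G=F\cap(-F\circ A)$ is a cleaner bookkeeping device, and Cellina plus the midpoint trick replaces the paper's explicit simplicial construction; the limiting step is the same in both.

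The trade-off is exactly the one you flag in your ``Main obstacle''. Cellina's approximation theorem and the sequential-compactness passage to the limit both need $X$ metrizable, whereas the statement assumes only compactness. The paper closes this gap not by strengthening the approximation theorem but by reducing to the simplicial case via nerves: for a general compact $X$ one takes a finite equivariant open cover $\mathcal V$, notes that the canonical map $X\to|\mathcal V|$ is equivariant so the nerve $|\mathcal V|$ is again $\but_n$, builds the piecewise-linear approximant on $|\mathcal V|$ using values of $F$ at chosen points $x_U\in U$, and then lets the cover refine (a net, not a sequence). If you want your proof to match the stated generality you will need either a non-metric version of the graph-approximation step or this nerve reduction; otherwise your argument is complete for compact metric $X$, which covers all the simplicial and manifold applications in the paper.
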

\begin{proof} First we give the proof for a finite simplicial complex
with a free simplicial involution.
Let $\{{\cal T}_i,\, i=1,2,\ldots\}$ be a sequence of antipodal triangulations of $X$, where ${\cal T}_{i+1}$ is the barycentric triangulation of ${\cal T}_i$.  Now we define an antipodal mapping $f_i:X\to {\Bbb R}^n$.  Note that if we define $f_i$ for all vertices $V({\cal T}_i)$, then $f_i$ can be piece-wise linearly extended to ${\cal T}_i$, i. e.  to $f_i:X\to {\Bbb R}^n$.
	
	 Let $x\in V({\cal T}_i)$. Then $A(x)$ is also a vertex of ${\cal T}_i$. Let $y\in F(x)$ be a point in ${\Bbb R}^d$ such that $(-y)\in F(A(x))$. Then for the pair $(x,A(x))$ set
$f_i(x):=y$ and $f_i(A(x)):=-y$.

Now we have a continuous antipodal mapping $f_i:X\to {\Bbb R}^n$. By assumption, $(X,A)\in\but_n$, therefore, there is a point $x_i\in X$ such that $f_i(x_i)=0$. Now, suppose that $x_i$ lies in a simplex $s_i$ of ${\cal T}_i$ with vertices $v_0^i,\dots,v_d^i$  and let $y_k^i:=f_i(v_k^i)$. (Note that $y_k^i\in F(v_k^i)\subset{\Bbb R}^n$.) We have $0\in \conv(y_0^i,\dots,y_d^i)$ in ${\Bbb R}^n$. Actually, we may assume that $d=n$. Indeed, if $d<n$, then we set $y_{d+i}=y_d,\, i=1,\ldots,n-d$. In the case when $d>n$ there exists a subset of $n+1$ points (without loss of generality one may assume that it is $\{y_0,\ldots,y_n\}$) such that its convex hull contains 0.

Let $t_0^i,\dots,t_n^i$ be the barycentric coordinates of $0$ relative to the simplex  in ${\Bbb R}^n$ with vertices $y_0^i,\dots,y_n^i$. Then
$$
\sum\limits_{k=0}^n{t_k^iy_k^i}=0.
$$

From compactness assumptions the sequences $\{x_i\}_{i=1}^\infty,\, \{t_k^i\}_{i=1}^\infty,\, \{y_k^i\}_{i=1}^\infty$, $k=0,1,\ldots,n$, may, after possibly renumbering them, be assumed to converge to points $x_0, t_k$ and $y_k, \, k=0,1,\ldots,n$ respectively.

Finally, since the graph of $F$ is closed, we have $y_k\in F(x_0),\, k=0,1,\ldots,n$. Since
$F$ takes convex values, $F(x_0)$ is convex and so  $0\in{\Bbb R}^n$, being a convex combination of the $y_k\in F(x_0)$. Thus $x_0$ is the required point in $X$.

In general consider the nerve $|{\cal V}|$ of a finite equivariant covering of $X$ Note that ${\cal V}$ is a finite simplicial complex with a free simplicial involution. For each $U\in {\cal V}$ take $x_U\in U$ in such a way that the collection of points $\{x_U\}$ is invariant under the action of the involution
$A:X\to X$. Define a set valued mapping on the vertex set of $|{\cal V}|$ as follows. A vertex of $|{\cal V}|$ corresponds to an element $U\in {\cal V}$ and
we define its image to be $F(x_U)$. By the above procedure we obtain a map $f_{{\cal V}}:|{\cal V}|\to \mathbb R^n$ which is affine on each simplex of ${\cal V}$ and antipodal with respect to the involution on $|{c}|$. Since a canonical map $X\to |{\cal V}|$ is equivariant, $|{\cal V}|$ is a $\but_n$-space, hence
$f_{{\cal V}}$ covers $0\in \mathbb R^n$. Using star refinement argument we see that for any finite covering $\cal U$ of $X$ there exists $U\in {\cal U}$ and points $x_i({\cal U})\in U$, $y_i({\cal U})\in F(x_i({\cal U}))$, $i=1,\dots,n$ such that $0\in\conv (y_1({\cal U}),\dots,y_n({\cal U}))$. Hence there exist $x_0\in X$ and $y_1,\dots,y_n\in F(x_0)$ such that $0\in\conv (y_1,\dots,y_n)$ and since $F(x_0)$ is convex, we obtain $0\in F(x_0)$.
\end{proof}

\section{Indexes and BUT--spaces}

In this section we recall properties of topological and homological indexes introduced by Yang~\cite{Yang54}, Schwarz~\cite{Sv} and Conner--Floyd~\cite{CF60}.

\subsection{Topological index}

The topological index for free $\mathbb Z_2$-spaces was invented by C.T.C.\,Yang in 1954 under the name $B$-index.

\begin{df}
Let $X$ be a free $\mathbb Z_2$-space. Denote by $\tind X$ the minimum $n$ such that there exists a continuos antipodal mapping
$X\to {\Bbb S}^n$. If no such $n$ exists, then $\tind X=\infty$.
\end{df}

Note that $X$ is a BUT$_n$-space if and only if $\tind X\ge n$.

\subsection{Yang's (co)homological index}

Yang \cite{Yang54} used homology groups in his definition. For us is more convenient to use \v Cech cohomology.

In what follows we denote by $T$ a free involution on a paracompact space $X$ and omit coefficients $\mathbb Z_2$ in cohomology groups.

\medskip

{\bf 1. First definition.}

\medskip

To define $\hind\!_{2}(\,\cdot\,)$ consider Smith sequence
$$
\begin{CD}
\dots @>>>H^k(X/T) @>\pi^*>> H^k(X) @>\pi^{!}>> H^k(X/T) @>\delta>> H^{k+1}(X/T)@>>>\dots
\end{CD}
$$

Put
$$
\delta^n=\delta\circ\delta\circ\dots\circ\delta: H^0(X/T)\to H^n(X/T).
$$

Index $\hind\!_{2}(X)$ equals the greatest $n$ such that $\delta^n(1)\not=0$.

\bigskip

{\bf 2. Second definition.}

\medskip

Put $w(X)=\delta(1)$.

It can be shown that $w(X)$ coincides with the first Stiefel--Whitney class of the line bundle $(X\times \mathbb R)/T\to X/T$, and that $w^n(X)=\delta^n(1)$.

By defininition $\hind\!_{2}(X)$ is the maximal $n$ such that $w^n(X)\not=0$.

\bigskip

{\bf 3. Third definition.}

\medskip

Consider $E_G\to B_G$ for $G=\mathbb Z_2$, i.e. $S^{\infty}\to \mathbb RP^{\infty}$.

\bigskip

The covering $X\to X/T$ is induced from the universal covering $S^{\infty}\to \mathbb RP^{\infty}$ by some map $\mu:X\to \mathbb RP^{\infty}$ (unique up to a homotopy), and $H^*(\mathbb RP^{\infty};\mathbb  Z_2)=\mathbb Z_2[w]$, $w\in H^1(\mathbb RP^{\infty};\mathbb  Z_2)$.

$\hind\!_{2}(X)$ equals maximal $n$ such that $\mu:H^i(\mathbb RP^{\infty};\mathbb  Z_2)\to H^i(X/T;\mathbb  Z_2)$ is a monomorphism for all $i\le n$,
or equivalently equals maximal $n$ such that $\mu:H^n(\mathbb RP^{\infty};\mathbb  Z_2)\to H^n(X/T;\mathbb  Z_2)$ is a monomorphism.

\subsection{Properties of $\hind\!_{2}(\,\cdot\,)$}

${}$

1. If there exists a $\mathbb Z_2$-map $X\to Y$ then \, $\hind\!_{2}(X)\leq \hind\!_{2}(Y)$.

2. If $X=A \bigcup B$ are open invariant subspaces, then $$\hind\!_{2}(X)\le \hind\!_{2}(A)+\hind\!_{2}(B)+1.$$

3. Tautness: If Y is a closed invariant subspace of $X$, then there exists an open invariant neighborhood of $Y$ such that
$\hind\!_{2}(Y)=\hind\!_{2}(U)$.

4. $\hind\!_{2}(X)>0$ if $X$ is connected.

5. If $X$ is finite dimensional or compact then $\hind\!_{2}(X)<\infty$.

6. Assume that $X$ is connected and $H^i(X;{\mathbb Z_2})=0$ for $0<i<N$. Then $\hind\!_{2}(X)\ge N$.

7. Assume that $X$ is finite dimensional and $H^i(X;{\mathbb Z_2})=0$ for $i>d$. Then $\hind\!_{2}(X)\le d$.

8. If there exists an equivariant map $f:X\to Y$ and $\hind\!_{2}(X)= \hind\!_{2}(Y)=k<\infty$ then $0\not=f^*:H^k(Y;\mathbb Z_2)\to H^k(X;\mathbb Z_2)$.

\medskip

In particular $\hind\!_{2}(S^n)=n$ and index is stable, that is, $\hind\!_{2}(X*\mathbb Z_2)=\hind\!_{2}(X)+1$.

\subsection{Proof of Property 8 for $\hind\!_{2}(\,\cdot\,)$}

This property was first proved by Conner and Floyd \cite{CF60}. Here we present more simple proof.

Denote by $\pi:X\to X/T$ the  projection and put $k=\hind\!_{2}X=\hind\!_{2}Y$. An equivariant map $f:X\to Y$ induces
a map of factor spaces $X/T\to Y/T$ and we have a commutative diagram
$$
\begin{CD}
H^k(X/T) @>\pi^*>> H^k(X) @>\pi^{!}>> H^k(X/T) @>\delta>> H^{k+1}(X/T)\\
@AAA @AAf^*A @AAA @AAA \\
H^k(Y/T) @>>> H^k(Y) @>>> H^k(Y/T) @>>\delta> H^{k+1}(Y/T) \end{CD}
$$
coefficients $\mathbb Z_2$ are omitted.

Since $\delta w^k(Y)=0$, there exists $\alpha\in H^k(Y)$ which is mapped to $w^k(Y)$.
Now $w^k(Y)$ is mapped to $w^k(X)$ and it follows that $f^*\alpha\not=0$, otherwise $w^k(X)=0$.

The following theorem is a partial converse to  Property~8 (see also \cite[Proposition 3.3]{Vo00}).

\begin{theorem}\label{tEQUIV}
Let $X$ and $Y$ be free $\mathbb Z_2$-spaces and $f: X\to Y$  an equivariant map.
Assume that

a) $\hind\!_{2}(Y) = k$,

b) $\dim X=k$,

c) $H^k(X)=H^k(Y)=\mathbb Z_2$,

d)  $f^*:H^k(Y)\to H^k(X)$ is an isomorphism.

Then \, $\hind\!_{2}(X) = k$.
\end{theorem}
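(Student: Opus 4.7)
The plan is to chase the comparison diagram of Smith sequences already set up for the proof of Property~8 and combine it with a dimension argument on the $X$-side. The upper bound $\hind\!_2(X)\le k$ is free from Property~7, since $\dim X=k$ forces $H^i(X/T;\mathbb Z_2)=0$ for $i>k$ (the free involution makes $X\to X/T$ a double cover, so $\dim X/T=\dim X=k$). The substance of the theorem is therefore to prove $w^k(X)\ne 0$.

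First I would produce a distinguished generator on the $Y$-side. From $\hind\!_2(Y)=k$, the relation $\delta\,w^k(Y)=w^{k+1}(Y)=0$ and exactness of the Smith sequence give an $\alpha\in H^k(Y)$ with $\pi^!(\alpha)=w^k(Y)$; since $w^k(Y)\ne 0$ and $H^k(Y)=\mathbb Z_2$, $\alpha$ must be the nonzero generator. Hypothesis~(d) then identifies $f^*(\alpha)$ as the generator of $H^k(X)=\mathbb Z_2$, and commutativity of the naturality square gives
\[
\pi^!(f^*\alpha)\;=\;\bar f^*(\pi^!\alpha)\;=\;\bar f^*(w^k(Y))\;=\;w^k(X),
\]
reducing the theorem to the statement that $\pi^!\colon H^k(X)\to H^k(X/T)$ does not annihilate the generator.

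To finish I would examine the Smith sequence of $X$ in top degree. Using $H^{k+1}(X/T)=0$, it reads
\[
H^k(X/T)\xrightarrow{\pi^*}H^k(X)\xrightarrow{\pi^!}H^k(X/T)\longrightarrow 0,
\]
so $\pi^!$ is surjective. If $H^k(X/T)$ were zero then $\pi^!=0$ and by exactness $\pi^*$ would be surjective onto $H^k(X)=\mathbb Z_2$; but $\pi^*$ has trivial domain, a contradiction. Hence $H^k(X/T)=\mathbb Z_2$ and the surjection $\pi^!\colon\mathbb Z_2\twoheadrightarrow\mathbb Z_2$ is an isomorphism, from which $w^k(X)=\pi^!(f^*\alpha)\ne 0$ and $\hind\!_2(X)=k$.

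The argument is essentially a two-step diagram chase, and I do not foresee a genuine obstacle. The one point demanding verification rather than intuition is the dimensionality input $H^{k+1}(X/T;\mathbb Z_2)=0$ --- equivalently that covering dimension passes from $X$ to the orbit space of the free involution --- and this is the spot I would check carefully before writing out the final proof.
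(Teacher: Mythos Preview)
Your proposal is correct and follows essentially the same route as the paper: use the naturality diagram of Smith sequences to transport the generator of $H^k(Y)$ to $H^k(X)$ via~(d), then exploit the vanishing of $H^{k+1}(X/T)$ to force $\pi^!$ to be nontrivial on $H^k(X)$. The paper phrases the last step as a contradiction (assume $w^k(X)=0$, deduce $\pi^!=0$, hence $\delta$ is injective, hence $H^k(X/T)=0$, hence $H^k(X)=0$), whereas you argue directly that $\pi^!$ is a surjection between copies of $\mathbb Z_2$; these are the same computation read in two directions, and your explicit mention of the upper bound via Property~7 and of the dimension identity $\dim X/T=\dim X$ only makes the argument cleaner.
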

\begin{proof} Consider the above diagram.
Since $w^k(Y)\not=0$ and $\delta w^k(Y)=0$, the generator of $H^k(Y)=\mathbb Z_2$ is mapped onto $w^k(Y)$.
Now $w^k(Y)$ is mapped to $w^k(X)$ and from assumption d) it follows that the generator of $H^k(X)=\mathbb Z_2$
is mapped onto $w^k(X)$. Now we argue by contradiction. If $w^k(X)=0$ then $\pi^{!}:H^k(X) \to H^k(X/T)$ is trivial. Hence
$\delta:H^k(X/T) \to H^{k+1}(X/T)$ is a monomorphism and since $H^{k+1}(X/T)=0$, we have $H^k(X/T)=0$. From the exactness of
the upper row we obtain $H^k(X)=0$, a contradiction.
\end{proof}

\subsection{Cohomological index for integer coefficients}

Defined similarly

\begin{itemize}

\item via Smith--Richardson sequences,

\item via Stiefel--Whitney class $u(X)\in H^1(X/T;\mathbb Z_t)$,

\item via cohomology of $\mathbb RP^{\infty}$ with coefficients in $\mathbb Z$ and $\mathbb Z_t$.

\end{itemize}

\bigskip

Here $\mathbb Z_t$ is $\mathbb Z$ with antipodal action of $T$:\,\,\, $Tm=-m,\,\,\, m\in \mathbb Z$.

Note that $\mathbb Z_t\otimes\mathbb Z_t=\mathbb Z$, hence $u^{2k}(X)\in H^{2k}(X/T;\mathbb Z)$
and $u^{2k+1}(X)\in H^{2k+1}(X/T;\mathbb Z_t)$.

Moreover,  $u^k(X)$ is mapped to $w^k(X)$ under the homomorphisms $H^{*}(X/T;\mathbb Z)\to H^{*}(X/T;\mathbb Z_2)$
and $H^{*}(X/T;\mathbb Z_t)\to H^{*}(X/T;\mathbb Z_2)$ induced by coefficient
homomorphisms $\mathbb Z\to \mathbb Z_2$, $\mathbb Z_t\to \mathbb Z_2$.

One has $H^{2k}(\mathbb RP^{\infty};\mathbb Z)=\mathbb Z_2$ for $k>0$, $H^{2k+1}(\mathbb RP^{\infty};\mathbb Z_t)=\mathbb Z_2$

The class $u^k(X)$ is the first obstruction class to the existence of an equivariant map $X\to S^{k-1}$.

The classes $u^k(Y\times Y\setminus \Delta)$ are van Kampen obstruction classes \cite{Mel}. Here $\Delta$ is the diagonal in $Y\times Y$
and free involution $T:Y\times Y\to Y\times Y$ changes factors: $T(y_1,y_2)=(y_2,y_1)$.

\bigskip

The index $\hind_{\mathbb Z}(\,\cdot\,)$ has properties similar to those of  $\hind\!_2(\,\cdot\,)$:

\medskip

1. If there exists an equivariant map $X\to Y$ then $\hind_{\mathbb Z}(X)\leq \hind_{\mathbb Z}(Y)$.

2. If $X=A \bigcup B$ are open invariant subspaces, then $$\hind_{\mathbb Z}(X)\le \hind_{\mathbb Z}(A)+\hind_{\mathbb Z}(B)+1.$$

3. Tautness: If Y is a closed invariant subspace of $X$, then there exists an open invariant neighborhood of $Y$ such that
$\hind_{\mathbb Z}(Y)=\hind_{\mathbb Z}(U)$.

4. $\hind_{\mathbb Z}(X)>0$ if $X$ is connected.

5. If $X$ is finite dimensional or compact then $\hind_{\mathbb Z}(X)<\infty$.

6. Assume that $X$ is connected and $H^i(X;{\mathbb Z})=0$ for $0<i<N$. Then $\hind_{\mathbb Z}(X)\ge N$.

7. Assume that $X$ is finite dimensional and $H^i(X;{\mathbb Z})=0$ for $i>d$. Then $\hind_{\mathbb Z}(X)\le d$.

8. If there exists an equivariant map $f:X\to Y$ and $\hind_{\mathbb Z}(X)= \hind_{\mathbb Z}(Y)=k<\infty$ then $0\not=f^*:H^k(Y;\mathbb Z)\to H^k(X;\mathbb Z)$.

\bigskip

To prove Property 8 for $\hind_{\mathbb Z}(\,\cdot\,)$ consider Richardson--Smith sequences
$$
\begin{CD}
\dots\to H^k(X/T;\mathbb Z_t) \to H^k(X;\mathbb Z) \to H^k(X/T;\mathbb Z) @>\delta_1>> H^{k+1}(X/T;\mathbb Z_t)\to\dots\\
\end{CD}
$$
and
$$
\begin{CD}
\dots\to H^k(X/T;\mathbb Z) \to H^k(X;\mathbb Z) \to H^k(X/T;\mathbb Z_t) @>\delta_2>> H^{k+1}(X/T;\mathbb Z)\to\dots\\
\end{CD}
$$
which are are cohomological sequences associated to exact triples of coefficient sheaves on $X/T$:
$$
0\to\mathbb Z_t\to{\cal A}\to\mathbb Z\to 0 \mbox{\quad\,\,\,and\,\,\,\quad} 0\to\mathbb Z\to {\cal A}\to\mathbb Z_t\to 0,
$$ respectively, $\cal A$
is the direct image of the constant sheaf $\mathbb Z$ on $X$.

We put
$$
\begin{aligned}
&\Delta_{2n}=(\delta_2\circ\delta_1)^n=\delta_2\circ\delta_1\circ\dots\circ\delta_2\circ\delta_1\colon H^0(X/T;\mathbb Z)\to H^{2n}(X/T;\mathbb Z)\\
&\mbox{and}\hphantom{xxxxxxxxxxxxxxxxxxxxxxxxxxxxxxx}\\
&\Delta_{2n+1}=\delta_1\circ (\delta_2\circ\delta_1)^n=\delta_1\circ\delta_2\circ\dots\circ\delta_2\circ\delta_1\colon
H^0(X/T;\mathbb Z)\to H^{2n+1}(X/T;\mathbb Z_t).
\end{aligned}
$$
Then $u^m(X)=\Delta_m(1)$ and index $\hind_{\mathbb Z}(X)$ equals the greatest $m$ such that $u^m(X)\not=0$.

Now assume that $f\colon X\to Y$ is equivariant and $\hind_{\mathbb Z}(X)=\hind_{\mathbb Z}(Y)=k$.

We have $u^k(Y)\not=0$ and $\delta_1 u^k(Y)=0$ if $k$ is even and $\delta_2 u^k(Y)=0$ if $k$ is odd.
Using first commutative diagram for $k$ even
$$
\begin{CD}
H^k(X/T;\mathbb Z_t) @>>> H^k(X;\mathbb Z) @>>> H^k(X/T;\mathbb Z) @>\delta_1>> H^{k+1}(X/T;\mathbb Z_t)\\
@AAA @AAf^*A @AAA @AAA \\
H^k(Y/T;\mathbb Z_t)@>>> H^k(Y;\mathbb Z) @>>> H^k(Y/T;\mathbb Z) @>>\delta_1> H^{k+1}(Y/T;\mathbb Z_t) \end{CD}
$$
and second for odd $k$
$$
\begin{CD}
H^k(X/T;\mathbb Z) @>>> H^k(X;\mathbb Z) @>>> H^k(X/T;\mathbb Z_t) @>\delta_2>> H^{k+1}(X/T;\mathbb Z)\\
@AAA @AAf^*A @AAA @AAA \\
H^k(Y/T;\mathbb Z) @>>> H^k(Y;\mathbb Z) @>>> H^k(Y/T;\mathbb Z_t) @>>\delta_2> H^{k+1}(Y/T;\mathbb Z) \end{CD}
$$
we see that there exists $\alpha\in H^k(Y;\mathbb Z)$ which is mapped to $u^k(Y)$.
Now $u^k(Y)$ is mapped to $u^k(X)$ and it follows that $f^*\alpha\not=0$ is mapped to $u^k(X)$.
Hence $f^*\alpha\not=0$, otherwise $u^k(X)=0$.

\bigskip
We have $\hind\!_2 X\le \hind_{\mathbb Z} X\le \tind X$. Note also that $\tind (\,\cdot\,)$ possesses properties similar to properties 1 -- 6
of cohomological index.

If $\dim X=\tind X$ then $\dim X=\tind X=\hind_{\mathbb Z} X$.
Yang \cite{Yang54} showed that there exists finite complex $X$ such that
$\hind_2 X< \hind_{\mathbb Z} X= \tind X=\dim X$. 

Reducing coefficients modulo 2 ($\mathbb Z_t\to \mathbb Z_2$ and $\mathbb Z\to \mathbb Z_2$)
turns both Richardson-Smith sequences into Smith sequence and the homomorphism induced by the reducing maps $u(X)$ onto $w(X)$.
In even dimension the coefficients are constant. Since $u(X)$ has order 2, this class belongs to 2-primary subgroup of $H^n(X/T;\mathbb Z)$,
where $n=\dim X$. Assume that 2-primary component of $H^n(X;\mathbb Z)$ is an elementary 2-group (i.e. a direct sum of $\mathbb Z_2$) then
the restriction of the homomorphism $H^n(X/T;\mathbb Z)\to H^n(X/T;\mathbb Z_2)$ on the 2-primary component is a monomorphism. Hence if
$u^n(X)\not=0$ then $w^n(X)\not=0$. For $n$ odd we need to replace constant coefficients $\mathbb Z$ by $\mathbb Z_t$. However we can reformulate
this sufficient condition for $n$ odd. Since both indicies are stable, we can replace $X$ by its suspension $SX=X*\mathbb Z_2$. Thus we have
the following sufficient condition:

\begin{prop} Let $X$ be a finite simplicial complex with a free simplicial action. Assume that $\tind X=\dim X=n$. If $n$ is even assume that
2-primary component of $H^n(X/T;\mathbb Z)$ is an elementary 2-group. If $n$ is odd assume that
2-primary component of $H^{n+1}(SX/T;\mathbb Z)$ is an elementary 2-group.

Then $\hind_2 X=\dim X=n$.
\end{prop}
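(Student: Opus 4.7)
The plan is to deduce $w^n(X)\neq 0$ from the non-vanishing of the integral class $u^n(X)$ and then invoke the second definition of $\hind_2$. Because $\tind X=\dim X=n$, the inequalities $\hind_2 X\le \hind_{\mathbb Z}X\le \tind X$ together with the equality $\dim X=\tind X=\hind_{\mathbb Z}X$ recorded just above the proposition give $\hind_{\mathbb Z}X=n$, so $u^n(X)=\Delta_n(1)\neq 0$. By construction this class is sent to $w^n(X)$ under the reduction on $H^n(X/T;\,\cdot\,)$ induced by $\mathbb Z\to\mathbb Z_2$ (when $n$ is even) or $\mathbb Z_t\to\mathbb Z_2$ (when $n$ is odd), so the task is to verify that this reduction is non-zero on $u^n(X)$.

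First treat the case $n$ even. Here $u^n(X)\in H^n(X/T;\mathbb Z)$, and because $2u(X)=0$ (a consequence of the Richardson--Smith setup: the sheaf $\mathbb Z_t$ yields a connecting-map image annihilated by $2$), the power $u^n(X)$ lies in the $2$-primary component of $H^n(X/T;\mathbb Z)$. By hypothesis this component is an elementary abelian $2$-group, and on such a subgroup the coefficient reduction $H^n(X/T;\mathbb Z)\to H^n(X/T;\mathbb Z_2)$ is injective (immediate from the Bockstein sequence of $0\to\mathbb Z\xrightarrow{\times 2}\mathbb Z\to\mathbb Z_2\to 0$, since an order-two element cannot be in the image of multiplication by $2$ from an elementary $2$-group). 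Therefore $w^n(X)\neq 0$, which by the second definition gives $\hind_2 X\ge n$; combined with $\hind_2 X\le\dim X=n$ from Property~7 this yields $\hind_2 X=n$.

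For $n$ odd, reduce to the even case by passing to the suspension $SX=X*\mathbb Z_2$, a free $\mathbb Z_2$-space of even dimension $n+1$. Stability of $\tind$ and $\hind_2$ gives $\tind(SX)=\tind X+1=n+1=\dim SX$ and $\hind_2(SX)=\hind_2 X+1$, while the hypothesis that the $2$-primary component of $H^{n+1}(SX/T;\mathbb Z)$ is elementary abelian $2$-group is precisely what the odd case of the proposition assumes. Applying the even case to $SX$ produces $\hind_2(SX)=n+1$, and hence $\hind_2 X=n$.

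The main technical obstacle is justifying the torsion statement $2u(X)=0$ so that $u^n(X)$ truly lives in the $2$-primary subgroup; this requires reading the relevant entry of the Richardson--Smith diagram and using that the involution acts on $\mathbb Z_t$ by $-1$, which annihilates twice any class produced by the connecting homomorphisms $\delta_1,\delta_2$. Once this torsion relation and the injectivity of the mod-$2$ reduction on elementary abelian $2$-subgroups are in place, the rest of the argument is a routine diagram chase combined with stability to handle the two parities.
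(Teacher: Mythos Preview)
Your proposal is correct and follows essentially the same route as the paper: use $\tind X=\dim X=n\Rightarrow \hind_{\mathbb Z}X=n$ to get $u^n(X)\neq 0$, then for $n$ even argue that $u^n(X)$, being $2$-torsion, survives the mod-$2$ reduction when the $2$-primary part of $H^n(X/T;\mathbb Z)$ is elementary abelian, and for $n$ odd suspend and invoke stability of the indices to reduce to the even case. The only cosmetic point is the phrasing of the Bockstein step: the kernel of $H^n(X/T;\mathbb Z)\to H^n(X/T;\mathbb Z_2)$ is $2H^n(X/T;\mathbb Z)$, and one must rule out that a nonzero $2$-primary element is $2y$ for \emph{any} $y$ (not just $y$ in the $2$-primary part); this is immediate once one decomposes $H^n(X/T;\mathbb Z)$ as a direct sum of its free part, its $2$-primary part, and its odd torsion.
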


This statement holds also for spaces equivariantly homotopic to finite simplicial (or $CW$-) complexes and compact $ANR$-spaces and hence can be applied to manifolds. Since higher dimensional integer cohomological group of a closed connected manifold is either $\mathbb Z$ or $\mathbb Z_2$, we obtain

\begin{corr} Let $M$ be a closed topological manifold with a free involution. Then \, $\tind M=\dim M$ if and only if \, $\hind_2 M=\dim M$.
\end{corr}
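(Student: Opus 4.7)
The corollary follows from the Proposition once its 2-primary hypothesis is checked for $M$ in both parities of $n=\dim M$, together with the general bound $\tind M\le\dim M$. To obtain that bound, I observe that $M/T$ is a closed $n$-manifold and hence has the homotopy type of an $n$-dimensional CW-complex, so the classifying map $\mu\colon M/T\to\mathbb RP^\infty$ of the double cover $M\to M/T$ may be homotoped into the $n$-skeleton $\mathbb RP^n$ by cellular approximation; lifting this produces an equivariant map $M\to S^n$, and hence $\tind M\le n$.

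The direction $\hind_2 M=\dim M\Rightarrow\tind M=\dim M$ is now immediate from the inequality chain $\hind_2 X\le\hind_{\mathbb Z} X\le\tind X$ recalled in Section~3. For the reverse direction, I plan to invoke the Proposition. We may assume $M$ is connected (otherwise the argument is applied to each $T$-invariant component, while a pair of components swapped by $T$ already has $\tind=0$ and contributes only when $\dim M=0$). Then $M/T$ is a closed connected $n$-manifold, and when $n$ is even the hypothesis of the Proposition concerns $H^n(M/T;\mathbb Z)$, which by the quoted manifold fact is either $\mathbb Z$ or $\mathbb Z_2$; its 2-primary component is therefore $0$ or $\mathbb Z_2$, i.e.\ an elementary 2-group, and the Proposition applies.

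The main obstacle is the case $n$ odd, where the hypothesis is on $H^{n+1}(SM/T;\mathbb Z)$ while $SM/T$ is not itself a closed manifold. The plan is to reduce to top-dimensional cohomology of the $n$-manifold $M/T$ with twisted coefficients. Using the join description $SM=M*S^0$ one identifies $SM/T$ with the mapping cone of the quotient $q\colon M\to M/T$, which is homotopy equivalent to the Thom space $T(L)$ of the real line bundle $L=M\times_T\mathbb R\to M/T$ associated with the double cover; moreover the orientation local system of $L$ coincides with the twisted coefficient system $\mathbb Z_t$ used earlier in the paper. The twisted Thom isomorphism then yields
$$
\tilde H^{n+1}(SM/T;\mathbb Z)\;\cong\;H^n(M/T;\mathbb Z_t),
$$
and Poincar\'e duality with local coefficients on the closed $n$-manifold $M/T$ gives
$$
H^n(M/T;\mathbb Z_t)\;\cong\;H_0(M/T;\mathbb Z_t\otimes\mathrm{or}_{M/T}).
$$
This group is $\mathbb Z$ or $\mathbb Z_2$, since $H_0$ of a rank-one local system over a connected space is either $\mathbb Z$ (trivial monodromy) or $\mathbb Z/2$ (non-trivial monodromy). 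Its 2-primary component is therefore elementary, and the Proposition delivers $\hind_2 M=n=\dim M$.
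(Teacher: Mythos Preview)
Your proof is correct and follows the same overall strategy as the paper: apply the Proposition after verifying that the relevant top cohomology group has elementary $2$-primary component. The paper's own argument is a single sentence: ``Since higher dimensional integer cohomological group of a closed connected manifold is either $\mathbb Z$ or $\mathbb Z_2$, we obtain [the corollary].'' For even $n$ this matches your argument exactly. For odd $n$, the paper does not spell out how this sentence handles the suspension condition on $H^{n+1}(SM/T;\mathbb Z)$; the intended reading is presumably to use the pre-suspension form of the hypothesis (the $2$-primary component of $H^n(M/T;\mathbb Z_t)$ being elementary), which is again $\mathbb Z$ or $\mathbb Z_2$ by Poincar\'e duality with local coefficients.

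Your treatment of the odd case is more explicit: you identify $SM/T$ with the mapping cone of $q\colon M\to M/T$, hence with the Thom space of the line bundle $L$, and then invoke the twisted Thom isomorphism to arrive at the same group $H^n(M/T;\mathbb Z_t)$. This is a genuine (if minor) addition of content over the paper's terse proof, and it makes transparent why the suspension reformulation in the Proposition really reduces to top cohomology of the $n$-manifold $M/T$. Your obstruction-theoretic justification of $\tind M\le\dim M$ is also a detail the paper omits.
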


\begin{df} An $n$-dimensional pseudomanifold $M$
is a finite simplicial complex such that

a) $M$ is a finite union of $n$-dimensional simplices and such that
each $(n-1)$-dimensional simplex is a face of precisely two (respectively one or two) $n$-dimensional simplices.

b) $M$ is strongly connected, i.e. any two $n$-dimensional simplices can be joined by a chain of
$n$-dimensional simplices in which each pair of neighboring simplices have a common $(n-1)$-dimensional face.

A complex satisfying condition a) is called an almost pseudomanifold.

The boundary $\partial M$ of $M$ consists of $(n-1)$-dimensional simplices each of which is
a face of precisely one $n$-dimensional simplex.

A pseudomanifold (almost pseudomanifold) is called closed if $\partial M=\emptyset$.
\end{df}

It follows from the definition that an almost pseudomanifold is a compact space and a pseudomanifold is a compact connected space.
An almost pseudomanifold $M$ is the finite union of pseudomanifolds $M=M_1\cup\dots\cup M_k$ where $\dim M_i\cap M_j\le \dim M-2$;
$i\not=j$. A boundary of $n$-dimensional pseudomanifold is not an almost pseudomanifold in general.

\begin{corr} Let $M$ be a closed almost pseudomanifold with a free simplicial involution. Then \, $\tind M=\dim M$ if and only if \, $\hind_2 M=\dim M$.
\end{corr}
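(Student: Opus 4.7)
The plan is to deduce this corollary directly from the preceding Proposition, which is already stated in the generality we need (finite simplicial complex with a free simplicial involution). The implication $\hind_2 M = \dim M \Rightarrow \tind M = \dim M$ is essentially immediate: combining $\hind_2 \le \tind$ (recorded earlier in this section) with $\tind M \le \dim M$ (standard equivariant obstruction theory on the $(\dim M)$-dimensional complex $M$, since all obstruction groups $\pi_k(S^{\dim M})$ with $k < \dim M$ vanish) forces $\hind_2 M = \tind M = \dim M$.

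For the forward direction, I fix $n = \dim M$, assume $\tind M = n$, and aim to apply the Proposition. What I must check is the 2-primary condition: that the 2-primary part of $H^n(M/T;\mathbb{Z})$ is an elementary 2-group when $n$ is even, and the analogous statement for $H^{n+1}(SM/T;\mathbb{Z})$ when $n$ is odd. To set this up, I first verify that, after one barycentric subdivision ensuring $s \cap Ts = \emptyset$ for every simplex $s$, the quotient $M/T$ inherits the structure of a closed almost pseudomanifold of dimension $n$; the two-neighbor condition on $(n-1)$-faces descends because the preimages of a single $(n-1)$-face form a free $T$-pair in $M$. A parallel argument shows that the suspension $SM = M * \mathbb{Z}_2$, with its diagonal free involution, is a closed almost pseudomanifold of dimension $n+1$, so that $SM/T$ is one of the same dimension.

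The problem then reduces to the purely topological claim: for any closed almost pseudomanifold $N$ of dimension $d$, the 2-primary component of $H^d(N;\mathbb{Z})$ is an elementary 2-group. Using the decomposition $N = N_1 \cup \dots \cup N_k$ into closed pseudomanifold pieces with $\dim(N_i \cap N_j) \le d-2$, the codimension hypothesis forces every $(d-1)$- and every $d$-simplex of $N$ to lie in a unique $N_i$, whence the top segment $C^{d-1}(N) \to C^d(N) \to 0$ of the cochain complex splits as a direct sum over $i$ and I obtain $H^d(N;\mathbb{Z}) = \bigoplus_i H^d(N_i;\mathbb{Z})$. The final ingredient is the classical pseudomanifold computation: for a closed connected pseudomanifold $P$ of dimension $d$, the image of $C^{d-1}(P) \to C^d(P)$ is generated by elements $\pm \sigma_a^* \pm \sigma_b^*$ coming from pairs of $d$-simplices sharing a $(d-1)$-face; strong connectedness makes the cokernel cyclic on a single generator, and orientability decides whether its order is infinite or equal to two. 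Hence $H^d(P;\mathbb{Z}) \in \{\mathbb{Z},\mathbb{Z}_2\}$, and the direct sum has elementary 2-primary component, as required.

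The step demanding the most care, rather than any deep obstacle, will be confirming that $M/T$ and $SM$ satisfy the precise definition of a closed almost pseudomanifold after the relevant subdivisions, and that the splitting $H^d(N;\mathbb{Z}) = \bigoplus_i H^d(N_i;\mathbb{Z})$ is not disturbed by the lower-dimensional strata along the $N_i \cap N_j$. Once this bookkeeping is in place, the hypothesis of the Proposition is verified and it delivers $\hind_2 M = n$.
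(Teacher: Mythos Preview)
Your proposal is correct and follows essentially the same route as the paper: verify that $M/T$, $SM$, and $SM/T$ are again closed almost pseudomanifolds, compute that the top integer cohomology of a closed almost pseudomanifold is a direct sum of copies of $\mathbb Z$ and $\mathbb Z_2$ (via the decomposition into pseudomanifold pieces meeting in codimension $\ge 2$), and then invoke the Proposition. The paper simply asserts these two facts without argument, whereas you supply the supporting details; the overall logic is identical.
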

\begin{proof}
If $M$ is a closed almost pseudomanifold with a free simplicial involution, then $M/T$, $SM$ and $SM/T$ \, are closed almost pseudomanifolds also.
Since the higher dimensional integer cohomology group of a closed almost pseudomanifold is a sum of a free
Abelian group and an elementary Abelian 2-group (for a pseudomanifold it is either $\mathbb Z$ or $\mathbb Z_2$), we are done.
\end{proof}

%%%%%%%%%%%%%%%%%%%%%%%%%%%%%%%%%%%%%%%%%%%%%%%%%%%%%%%%%%%%%%%%%%%%%%%%%%%%%%%%%%%%%%%%%%%%%%%%%%%%%%%%%%%%%%%%%%%%%%%%%%%%%%%%%%

%\newpage
\section{Bounded BUT--spaces}

The main tool in this section is  Property 8 of cohomological indicies.

Let $X$ be a subspace of $Z$ (no action of $\mathbb Z_2$ on $Z\setminus X$ assumed, and we don't assume here that $Z$ is
a simplicial complex). Denote by $i:X\to Z$ the inclusion.

\begin{prop} Assume that $\hind\!_2 X\geq d-1$ and $f\colon Z\to \mathbb R^d$ is a map which is equivariant on X
(i.e. $f(Tx)=-f(x)$ for any $x\in X$).

If $0=i^*:H^{d-1}(Z;\mathbb Z_2)\to H^{d-1}(X;\mathbb Z_2)$ then $f^{-1}(0)\not=\varnothing$.
\end{prop}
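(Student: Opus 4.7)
The plan is to argue by contradiction, reducing to the sphere $S^{d-1}$ and then invoking Property 8 of the cohomological index together with the hypothesis that $i^{*}$ vanishes in degree $d-1$.

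Suppose for contradiction that $f^{-1}(0)=\varnothing$. Then $f$ factors through $\mathbb{R}^d\setminus\{0\}$, and composing with radial projection $y\mapsto y/|y|$ we obtain a continuous map $g\colon Z\to S^{d-1}$. Since $f$ is equivariant on $X$ (and radial projection commutes with the antipodal action on $\mathbb{R}^d\setminus\{0\}$), the restriction $h:=g\circ i\colon X\to S^{d-1}$ is a $\mathbb{Z}_2$-equivariant map.

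The existence of the equivariant map $h$ forces, by Property 1 of $\hind_2$, the inequality $\hind_2 X\le \hind_2 S^{d-1}=d-1$. Combined with the hypothesis $\hind_2 X\ge d-1$, this yields $\hind_2 X=\hind_2 S^{d-1}=d-1$. Now Property 8 of $\hind_2$ applies to the equivariant map $h$ and gives
\[
0\neq h^{*}\colon H^{d-1}(S^{d-1};\mathbb{Z}_2)\to H^{d-1}(X;\mathbb{Z}_2).
\]

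Finally I would derive the contradiction by factoring $h^{*}$. Since $h=g\circ i$, functoriality gives $h^{*}=i^{*}\circ g^{*}$, where $g^{*}\colon H^{d-1}(S^{d-1};\mathbb{Z}_2)\to H^{d-1}(Z;\mathbb{Z}_2)$ and $i^{*}\colon H^{d-1}(Z;\mathbb{Z}_2)\to H^{d-1}(X;\mathbb{Z}_2)$. By hypothesis $i^{*}=0$, so $h^{*}=0$, contradicting the previous display. Hence $f^{-1}(0)\neq\varnothing$.

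The only subtle point is making sure Property 8 applies, which requires the equality $\hind_2 X=\hind_2 S^{d-1}=d-1$; this is where the hypothesis $\hind_2 X\ge d-1$ is used (the reverse inequality being automatic from the existence of the equivariant map $h$). Everything else is formal, so I do not anticipate any real technical obstacle — the essence is that the hypothesis $i^{*}=0$ in degree $d-1$ kills precisely the obstruction that Property 8 guarantees to be nonzero.
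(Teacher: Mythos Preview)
Your proof is correct and follows essentially the same route as the paper: assume $f^{-1}(0)=\varnothing$, observe that $f\circ i$ is then an equivariant map from $X$ to a space of index $d-1$, and derive a contradiction between Property~8 (which forces $(f\circ i)^*\neq 0$ in degree $d-1$) and the factorization $(f\circ i)^*=i^*\circ f^*=0$. The only cosmetic difference is that you pass to $S^{d-1}$ via radial projection while the paper works directly with $\mathbb{R}^d\setminus\{0\}$; since these are equivariantly homotopy equivalent this changes nothing.
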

\begin{proof} Suppose that $f^{-1}(0)=\varnothing$. Then $f:Z\to\mathbb R^d\setminus \{0\}$, and it is equivariant on $X$.
Since $\hind\!_2 X=\hind\!_2 (\mathbb R^d\setminus \{0\})=d-1$ we have a contradiction with property~8.
\end{proof}
Similarly we can use $\hind_{\mathbb Z}(\,\cdot\,)$, for example we have:
\begin{prop} Assume that $\hind_{\mathbb Z} X\geq d-1$ and $f\colon Z\to \mathbb R^d$ is a map which is equivariant on X
(i.e. $f(Tx)=-f(x)$ for any $x\in X$).

If $0=i^*:H^{d-1}(Z;\mathbb Z)\to H^{d-1}(X;\mathbb Z)$ then $f^{-1}(0)\not=\varnothing$.
\end{prop}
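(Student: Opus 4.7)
The plan is to run exactly the argument that the authors used for the preceding $\mathbb Z_2$-coefficient proposition, substituting $\hind_{\mathbb Z}$ for $\hind\!_2$ and integer coefficients for $\mathbb Z_2$ coefficients throughout. The integer analogs of the needed tools (monotonicity Property~1 and, above all, Property~8) have just been verified for $\hind_{\mathbb Z}$ in Subsection~3.5, so the same scheme goes through verbatim.

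First I would argue by contradiction. Suppose $f^{-1}(0)=\varnothing$; then $f$ factors through a continuous map $Z\to\mathbb R^d\setminus\{0\}$, and its restriction $f|_X=f\circ i$ is equivariant with respect to $T$ on $X$ and the antipodal involution on $\mathbb R^d\setminus\{0\}$. Next I would observe that $\mathbb R^d\setminus\{0\}$ equivariantly deformation retracts onto $S^{d-1}$ via $x\mapsto x/|x|$, so $\hind_{\mathbb Z}(\mathbb R^d\setminus\{0\})=\hind_{\mathbb Z}(S^{d-1})=d-1$ (the latter equality following from Properties~6 and~7 of $\hind_{\mathbb Z}$).

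Now I would combine this with the hypothesis. Applying the monotonicity Property~1 to the equivariant map $f|_X\colon X\to\mathbb R^d\setminus\{0\}$ gives $\hind_{\mathbb Z}(X)\le d-1$, which together with the standing assumption $\hind_{\mathbb Z}(X)\ge d-1$ yields the equality $\hind_{\mathbb Z}(X)=\hind_{\mathbb Z}(\mathbb R^d\setminus\{0\})=d-1$. At this point Property~8 for $\hind_{\mathbb Z}$ applies to $f|_X$ and produces
\[
0\ne (f|_X)^*\colon H^{d-1}(\mathbb R^d\setminus\{0\};\mathbb Z)\longrightarrow H^{d-1}(X;\mathbb Z).
\]
But functoriality gives $(f|_X)^*=i^*\circ f^*$, and the hypothesis $i^*=0$ on $H^{d-1}(\,\cdot\,;\mathbb Z)$ forces $(f|_X)^*=0$, a contradiction.

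The main obstacle is essentially nonexistent: the argument is structurally identical to the $\mathbb Z_2$-coefficient case, and the only nontrivial content borrowed from the previous subsection is Property~8 for $\hind_{\mathbb Z}$, whose proof via Richardson--Smith sequences has already been carried out. The one small point that deserves attention is that Property~8 requires equality of indices rather than the inequality $\hind_{\mathbb Z}(X)\ge d-1$ we are given; this equality is supplied automatically by monotonicity once we have produced an equivariant map of $X$ into $S^{d-1}$.
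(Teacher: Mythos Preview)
Your argument is correct and matches the paper's approach exactly: the paper introduces this proposition with the phrase ``Similarly we can use $\hind_{\mathbb Z}(\,\cdot\,)$'' and intends the reader to rerun the contradiction argument from the $\mathbb Z_2$ case verbatim with integer coefficients, just as you do. Your write-up is simply a more detailed version of the paper's terse proof, including the explicit observation that equality of indices (needed for Property~8) follows from combining the hypothesis $\hind_{\mathbb Z}X\ge d-1$ with monotonicity.
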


In the simplicial case ($X$ is a subcomplex of finite simplicial complex $Z$) we obtain:
\begin{corr}
Assume that\, $\dim X=d-1=\tind X$. Assume that the inclusion $X\subset Z$ induces trivial homomorphism in
$d$-dimensional integer homology.
Then for any triangulation of $Z$ which is equivariant on
$X$ and for any $\Pi_d$-labeling of $Z$ which is equivariant on
$X$ there exists a complementary edge.
\end{corr}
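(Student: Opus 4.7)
The plan is to reduce the corollary to Proposition~4.2 by converting the combinatorial $\Pi_d$-labeling into a continuous map $f: Z \to \mathbb{R}^d$ that is antipodal on $X$, and then arguing by contradiction that if no complementary edge exists then $f$ avoids $0$.

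First, I would embed the label set antipodally into $\mathbb R^d$ as the vertices of the standard cross-polytope, via $p_{+k}=e_k$, $p_{-k}=-e_k$. The decisive geometric property of this embedding is that the convex hull of a subset of $\{\pm e_1,\dots,\pm e_d\}$ contains the origin if and only if the subset includes an antipodal pair $\{+k,-k\}$: otherwise the chosen vectors are linearly independent and no nontrivial convex combination vanishes.

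Next, using the given antipodal triangulation $\mathcal T$ of $Z$, define $f\colon Z\to\mathbb R^d$ on vertices by $f(v)=p_{L(v)}$ and extend it affinely on each simplex of $\mathcal T$. Since $L$ is antipodal on $V(\mathcal T)\cap X$ and the cross-polytope embedding is antipodal, the restriction $f|_X$ is equivariant. Suppose for contradiction that no complementary edge exists. Then no simplex of $\mathcal T$ carries both labels $+k$ and $-k$; by the geometric observation above, $0\notin f(\sigma)$ for every simplex $\sigma$, so $f$ lands in $\mathbb R^d\setminus\{0\}$.

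It remains to invoke Proposition 4.2 to get a contradiction. The hypothesis $\dim X=d-1=\tind X$ forces $\hind_{\mathbb Z} X=d-1$, because $\hind_{\mathbb Z} X\le\tind X$ always and the inequality is saturated when $\dim X=\tind X$, as recorded after Corollary 3.2. The hypothesized triviality of the inclusion-induced map in integer $(d\!-\!1)$-cohomology supplies the remaining requirement $i^*\colon H^{d-1}(Z;\mathbb Z)\to H^{d-1}(X;\mathbb Z)$ is zero. Proposition 4.2 then gives $f^{-1}(0)\neq\varnothing$, contradicting the conclusion of the previous paragraph; therefore a complementary edge must exist.

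The main obstacle is reconciling the stated hypothesis with the cohomological condition that Proposition~4.2 actually needs: since $X$ is $(d-1)$-dimensional, the universal coefficient theorem decomposes $H^{d-1}(X;\mathbb Z)$ into a free part dual to $H_{d-1}(X;\mathbb Z)$ and a $\mathrm{Ext}$-summand coming from torsion in $H_{d-2}(X;\mathbb Z)$, and by naturality the homological hypothesis controls the free part but in general leaves the Ext contribution to handle separately — one must either verify that $f^*$ of the generator lands in the free summand, or impose the slightly stronger cohomological form of the hypothesis (which is what the preceding proposition is really asking for).
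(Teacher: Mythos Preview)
Your approach is exactly the one the paper intends: the corollary is stated immediately after Proposition~4.2 with only the words ``In the simplicial case \dots\ we obtain,'' so the implicit argument is precisely your cross-polytope construction --- send the labels to the vertices $\pm e_k$, extend affinely, and observe that the absence of a complementary edge keeps the image off $0$, contradicting Proposition~4.2 once you have $\hind_{\mathbb Z}X=d-1$ from $\dim X=\tind X=d-1$.

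Your final paragraph is well taken and worth keeping. The paper's hypothesis, as written, concerns integer \emph{homology} (and says ``$d$-dimensional,'' which is evidently a slip for $(d-1)$-dimensional, since $H_d(X)=0$ for a $(d{-}1)$-complex). But Proposition~4.2 requires the vanishing of $i^*$ on $H^{d-1}(\,\cdot\,;\mathbb Z)$, and via the universal coefficient theorem the homological hypothesis only controls the $\Hom$-summand of $H^{d-1}(X;\mathbb Z)$, not the $\mathrm{Ext}(H_{d-2}(X;\mathbb Z),\mathbb Z)$ piece. So either one reads the hypothesis cohomologically (which is surely what is meant, given the preceding proposition), or one adds a mild assumption such as $H_{d-2}(X;\mathbb Z)$ torsion-free. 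Your diagnosis of this gap is accurate; the paper does not address it.
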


\begin{theorem} Let $X$ be a free $\mathbb Z_2$-space, $i:X\subset Z$. Let $K$ be a free $\mathbb Z_2$-space with involution  $A:K\to K$ and $f:Z\to K$ is a map equivariant on $X$.
Assume that $\hind\!_2 X=d-1$ and that the inclusion $i:X\subset Z$ induces trivial homomorphism $0=i^*:H^{d-1}(Z;\mathbb Z_2)\to H^{d-1}(X;\mathbb Z_2)$.
Then $\hind\!_2 K \ge d$.

If in addition $K$ is a connected closed $d$-dimensional topological manifold or a pseudomanifold then for any $y\in K$ at least one of the sets $f^{-1}(y)$, $f^{-1}(Ay)$ is nonempty.
\end{theorem}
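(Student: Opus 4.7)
My plan is to run the same Yang-index argument twice, once for each assertion, using Properties 1, 7, and 8 of $\hind\!_2$ from the previous section as the only ingredients.

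For the first assertion, the restriction $f|_X\colon X\to K$ is equivariant, so Property 1 yields $\hind\!_2 K\ge \hind\!_2 X=d-1$. I would argue by contradiction: suppose $\hind\!_2 K=d-1$. Since the two finite indices then agree, Property 8 forces $(f|_X)^*\neq 0$ on $H^{d-1}(-;\mathbb Z_2)$. But $f|_X=f\circ i$, so $(f|_X)^*=i^*\circ f^*$ by functoriality, and the hypothesis $i^*=0$ collapses this composition to zero. The contradiction gives $\hind\!_2 K\ge d$.

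For the second assertion, I would again argue by contradiction: suppose $y\in K$ with $f^{-1}(y)=\varnothing=f^{-1}(Ay)$. Then $f$ factors through the invariant subspace $K':=K\setminus\{y,Ay\}$ (on which $A$ still restricts to a free involution), and the first-assertion argument can be repeated verbatim with $K$ replaced by $K'$, provided I can show $\hind\!_2 K'\le d-1$. Once this bound is in hand, Property 1 forces $\hind\!_2 K'=d-1$, Property 8 gives $(f|_X)^*\neq 0$ on $H^{d-1}(K';\mathbb Z_2)$, and factoring through $i^*=0$ yields the contradiction.

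Everything thus reduces to the cohomological vanishing $H^d(K';\mathbb Z_2)=0$, which together with $\dim K'\le d$ delivers $\hind\!_2 K'\le d-1$ via Property 7. In the manifold case this is immediate, since a connected non-compact $d$-manifold has vanishing top mod-$2$ cohomology by Poincar\'e duality. In the pseudomanifold case I would use the long exact sequence of the pair $(K,K')$ combined with excision: the local groups $H^*(K,K';\mathbb Z_2)$ are controlled by the cohomology of the links of $y$ and $Ay$, which are $(d-1)$-dimensional almost pseudomanifolds, so dimension yields $H^{d+1}(K,K';\mathbb Z_2)=0$ while the local fundamental classes surject onto the global one, making $H^d(K,K';\mathbb Z_2)\twoheadrightarrow H^d(K;\mathbb Z_2)$; exactness then pinches $H^d(K';\mathbb Z_2)$ to zero. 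This local-to-global computation at the (possibly singular) points $y,Ay$ is the one non-formal step; the rest is bookkeeping with Properties 1, 7, and 8.
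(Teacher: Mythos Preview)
Your argument is correct and matches the paper's proof essentially line for line: Property~1 gives $\hind\!_2 K\ge d-1$, the factorization $(f\circ i)^*=i^*\circ f^*=0$ together with Property~8 rules out equality, and for the second part one removes $\{y,Ay\}$ and contradicts Property~7 via the vanishing of $H^d(K\setminus\{y,Ay\};\mathbb Z_2)$. The only cosmetic difference is that the paper, having just proved the first assertion, simply invokes it for $K'=K\setminus\{y,Ay\}$ to get $\hind\!_2 K'\ge d$ directly, rather than rerunning the Property~1/8 contradiction as you do; and the paper dispatches the pseudomanifold case with the bare phrase ``open manifold,'' whereas your long-exact-sequence sketch for that case is more explicit than what the paper provides.
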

\begin{proof} The map $f\circ i:X\to K$ is equivariant, so $\hind\!_2 K \ge \hind\!_2 X= d-1$. Since $(f\circ i)^*=i^*\circ f^*=0$ in dimension $d-1$, it follows from property~8 of index that $\hind\!_2 K \not= d-1$. Therefore $\hind\!_2 K \ge d$.

When $K$ is a manifold we argue by contradiction. Let $y\in K$ be a point such that $f^{-1}(y)=\emptyset=f^{-1}(Ay)$. Then $f$ maps $Z$ to
$K\setminus\{y\cup Ay\}$ and $f\circ i:X\to K\setminus\{y\cup Ay\}$ is equivariant. Applying the first statement we obtain that $\hind\!_2 (K\setminus\{y\cup Ay\}) \ge d$.
On the other hand $K\setminus\{y\cup Ay\}$ is an open manifold, hence $H^j(K\setminus\{y\cup Ay\};\mathbb Z_2)=0$ for $j\ge d$, and from property~7 of index we
obtain $\hind\!_2 (K\setminus\{y\cup Ay\}) < d$.
\end{proof}

Similar argument proves the following theorem.
\begin{theorem} Let $X$ be a free $\mathbb Z_2$-space, $i:X\subset Z$. Let $K$ be a free $\mathbb Z_2$-space with involution  $A:K\to K$ and $f:Z\to K$ is a map equivariant on $X$.
Assume that $\hind_{\mathbb Z} X=d-1$ and that the inclusion $i:X\subset Z$ induces trivial homomorphism $0=i^*:H^{d-1}(Z;\mathbb Z)\to H^{d-1}(X;\mathbb Z)$.
Then $\hind_{\mathbb Z} K \ge d$.

If in addition $K$ is a connected closed topological $d$-dimensional manifold or a pseudomanifold then for any $y\in K$ at least one of the sets $f^{-1}(y)$, $f^{-1}(Ay)$ is nonempty.
\end{theorem}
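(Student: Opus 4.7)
The argument is a direct transcription of the preceding theorem's proof, with every use of $\hind\!_2$ replaced by the integer index $\hind_{\mathbb Z}$ and every occurrence of $\mathbb Z_2$ coefficients replaced by $\mathbb Z$ coefficients; all the ingredients needed---properties~1, 7 and 8 of the cohomological index---are already established in Section~3.5 for $\hind_{\mathbb Z}$ in verbatim form.

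For the first assertion, the equivariant composition $f\circ i\colon X\to K$ together with property~1 gives $\hind_{\mathbb Z}K\ge\hind_{\mathbb Z}X=d-1$. Suppose toward a contradiction that $\hind_{\mathbb Z}K=d-1$. Then both spaces have the same finite index $d-1$, so property~8 for $\hind_{\mathbb Z}$ forces $(f\circ i)^*\colon H^{d-1}(K;\mathbb Z)\to H^{d-1}(X;\mathbb Z)$ to be nonzero. But by functoriality $(f\circ i)^*=i^*\circ f^*$, and the hypothesis $i^*=0$ on $H^{d-1}(\,\cdot\,;\mathbb Z)$ annihilates the composition. This contradiction yields $\hind_{\mathbb Z}K\ge d$.

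For the second assertion, suppose $K$ is a closed connected topological $d$-manifold or a closed $d$-dimensional pseudomanifold, and assume for contradiction the existence of $y\in K$ with $f^{-1}(y)=f^{-1}(Ay)=\varnothing$. Freeness of $A$ ensures $Ay\ne y$, so $K':=K\setminus\{y,Ay\}$ is an open invariant subspace inheriting a free $\mathbb Z_2$-action, and $f$ factors as a map $Z\to K'$ that is still equivariant on $X$. Applying the first part with $K$ replaced by $K'$ (the hypothesis on $i^*$ only involves $X$ and $Z$, so it is unchanged) gives $\hind_{\mathbb Z}K'\ge d$. On the other hand $K'$ is finite-dimensional, so provided $H^j(K';\mathbb Z)=0$ for all $j\ge d$, property~7 yields $\hind_{\mathbb Z}K'\le d-1$, a contradiction.

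The only nonroutine step---and the main obstacle---is verifying the vanishing $H^d(K';\mathbb Z)=0$. For a topological manifold this is the standard fact that a noncompact connected $d$-manifold has trivial top integer cohomology. For a closed $d$-dimensional pseudomanifold $K$ the same conclusion holds: excising small open star neighborhoods of $y$ and $Ay$ produces a compact pseudomanifold-with-nonempty-boundary that is homotopy equivalent to $K'$, and such a complex carries no fundamental class in top degree (equivalently, $H^d$ vanishes), by the standard pseudomanifold analogue of Poincar\'e--Lefschetz duality or, concretely, by a direct fundamental-cycle argument using the chain-level description of $\check{H}^*$ on finite simplicial complexes. Once this cohomological computation is recorded, the remainder is a straightforward diagram chase using only properties~1, 7, and 8 of $\hind_{\mathbb Z}$.
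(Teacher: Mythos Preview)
Your proof is correct and matches the paper's intended argument exactly: the paper simply writes ``Similar argument proves the following theorem,'' referring to the preceding $\hind\!_2$ version, and your transcription---using properties~1, 7, and 8 of $\hind_{\mathbb Z}$ from Section~3.5 in place of their $\mathbb Z_2$ counterparts---is precisely that similar argument. If anything, you supply more detail on the vanishing of $H^d(K\setminus\{y,Ay\};\mathbb Z)$ in the pseudomanifold case than the paper does.
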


Let $Y$ be a free $\mathbb Z_2$-space. The action of $\mathbb Z_2$ on $Y$ is naturally extended to the action on the cone $CY$ with a unique fixed point, the vertex $c\in CY$ of the cone.

\begin{theorem} Let $X$ and $Y$ be free $\mathbb Z_2$-spaces such that $\hind\!_{2}(X) = k=\hind\!_{2}(Y)$.
Assume that $X$ is a closed subset of a paracompact space $Z$ and that the inclusion $i:X\to Z$ induces the trivial homomorphism $i^*:H^k(Z;\mathbb Z_2)\to H^k(X;\mathbb Z_2)$.
Then for any continuous $f:Z\to CY$ which is equivariant on $X$ the vertex $c\in CY$ of the cone $CY$ is in the image of $f$.
\end{theorem}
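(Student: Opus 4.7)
The plan is to argue by contradiction, reducing to a situation where we can apply Property~8 of $\hind\!_2$. Suppose the vertex $c$ is not in the image of $f$. Then $f$ factors as a continuous map $\bar f\colon Z\to CY\setminus\{c\}$. Writing points of $CY$ as equivalence classes $[y,t]$ with $t\in[0,1]$ and the cone point corresponding to $t=0$, the punctured cone $CY\setminus\{c\}$ is the subset where $t>0$, and the map $r\colon CY\setminus\{c\}\to Y$ given by $[y,t]\mapsto y$ is a deformation retraction. Crucially, the $\mathbb Z_2$-action on $CY$ preserves the cone parameter $t$, so both the inclusion $Y\hookrightarrow CY\setminus\{c\}$ and $r$ are $\mathbb Z_2$-equivariant.

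Setting $g=r\circ\bar f\colon Z\to Y$, I would observe that although $g$ need not be equivariant on all of $Z$, the restriction $g\circ i=g|_X\colon X\to Y$ \emph{is} equivariant, because $f|_X$ is equivariant by hypothesis and $r$ is equivariant. Thus $g\circ i$ is an equivariant map between free $\mathbb Z_2$-spaces with $\hind\!_2(X)=k=\hind\!_2(Y)$, and Property~8 of the cohomological index forces the induced map
\[
(g\circ i)^*\colon H^k(Y;\mathbb Z_2)\to H^k(X;\mathbb Z_2)
\]
to be nonzero.

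To close the argument, I would factor this map as $(g\circ i)^*=i^*\circ g^*$, where $g^*\colon H^k(Y;\mathbb Z_2)\to H^k(Z;\mathbb Z_2)$ and $i^*\colon H^k(Z;\mathbb Z_2)\to H^k(X;\mathbb Z_2)$. The hypothesis that $i^*=0$ in degree $k$ immediately yields $(g\circ i)^*=0$, contradicting the conclusion of Property~8. Hence $c\in f(Z)$.

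The main subtle point — rather than a serious obstacle — is recognizing that one does not need $f$ itself to be equivariant on $Z$ in order to apply Property~8: only the composition with the retraction needs to be equivariant on $X$, and then the factorization through $H^k(Z;\mathbb Z_2)$ is enough to exploit the triviality of $i^*$. The only other point to double-check is the equivariance of the deformation retraction of the punctured cone, which is immediate from the product-type description of $CY$ and the fact that the $\mathbb Z_2$-action acts only on the $Y$-coordinate.
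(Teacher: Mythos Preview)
Your proof is correct and follows essentially the same approach as the paper: assume $c\notin f(Z)$, compose with the equivariant retraction $CY\setminus\{c\}\to Y$, and observe that the resulting equivariant map $X\to Y$ would have trivial induced map on $H^k$ (since it factors through $i^*=0$), contradicting Property~8. Your write-up is simply more explicit about the cone coordinates and the equivariance of the retraction than the paper's terse version.
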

\begin{proof} An obvious map $p:CY\stm c\to Y$ is equivariant. If $f(Z)\subset CY\stm c$ then
$f\circ i:X\to Y$ is equivariant and induces a trivial homomorphism
of cohomology groups in dimension $k$. This contradicts with the property~8.
\end{proof}
Similar assertion holds for $\hind_{\mathbb Z}(\,\cdot\,)$.

\begin{theorem} Let $X$ be a free $\mathbb Z_2$-space, $i:X\subset Z$. Let $M$ be a connected closed topological $d$-dimensional manifold (or a pseudomanifold)
with a non--free involution  $A:M\to M$. Let $F$ be the fixed point set of $A$ and assume that $f:Z\to M$ is a map equivariant on $X$.
If \, $\hind_{\mathbb Z} X=d-1$ and the inclusion $i:X\subset Z$ induces trivial homomorphism $0=i^*:H^{d-1}(Z;\mathbb Z)\to H^{d-1}(X;\mathbb Z)$
then $f^{-1}F\not=\emptyset$.
\end{theorem}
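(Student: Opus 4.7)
The plan is to adapt the proof of the immediately preceding theorem, substituting $M\setminus F$ for the role played there by $K\setminus\{y,Ay\}$. I argue by contradiction: suppose $f^{-1}(F)=\emptyset$. Since $A$ restricts to a free involution on the complement $M\setminus F$ (by the very definition of $F$ as the fixed point set), the composite $g:=f\circ i\colon X\to M\setminus F$ is a well-defined equivariant map between free $\mathbb Z_2$-spaces.

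The next step is to determine $\hind_{\mathbb Z}(M\setminus F)$. Property~1 applied to $g$ yields $\hind_{\mathbb Z}(M\setminus F)\ge \hind_{\mathbb Z}(X)=d-1$. For the reverse inequality I invoke Property~7: because the involution $A$ is non-free, $F$ is a non-empty closed subset of $M$, so $M\setminus F$ is a non-compact open $d$-dimensional manifold (respectively, pseudomanifold), and therefore $H^{j}(M\setminus F;\mathbb Z)=0$ for $j\ge d$. Consequently $\hind_{\mathbb Z}(M\setminus F)\le d-1$, and equality holds.

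Property~8 applied to the equivariant map $g$ between free $\mathbb Z_2$-spaces of common finite index $d-1$ then forces $g^{*}\colon H^{d-1}(M\setminus F;\mathbb Z)\to H^{d-1}(X;\mathbb Z)$ to be non-zero. But $g^{*}=i^{*}\circ f^{*}$ factors through $H^{d-1}(Z;\mathbb Z)$, on which $i^{*}$ vanishes by hypothesis. Hence $g^{*}=0$, a contradiction, and therefore $f^{-1}(F)\neq\emptyset$.

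The step I expect to be the main obstacle is justifying the vanishing $H^{d}(M\setminus F;\mathbb Z)=0$ in the pseudomanifold case: for a topological $d$-manifold this is immediate since a non-compact connected $d$-manifold has trivial top-dimensional integer cohomology, but for a closed pseudomanifold with a non-empty closed subset removed it requires a separate short verification of the same flavour as the pseudomanifold alternative used in the preceding theorem.
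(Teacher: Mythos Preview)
Your argument is correct and matches the paper's proof exactly: the paper's one-line proof reads ``By contradiction, using Property~8 and the inequality $\hind_{\mathbb Z}(M\setminus F)<d$,'' and your expansion---assuming $f^{-1}(F)=\emptyset$, obtaining the equivariant map $g=f\circ i\colon X\to M\setminus F$, pinning down $\hind_{\mathbb Z}(M\setminus F)=d-1$ via Properties~1 and~7, and then deriving the contradiction $0\ne g^{*}=i^{*}\circ f^{*}=0$ from Property~8---is precisely the intended unpacking. Your caveat about the pseudomanifold case is fair, but the paper itself does not elaborate on it either, treating $H^{d}(M\setminus F)=0$ as understood in both settings.
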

\begin{proof} By contradiction, using Property 8 and the inequality  $\hind_{\mathbb Z} (M\setminus F)<d$.
\end{proof}
Similar assertion holds for $\hind\!_{2}(\,\cdot\,)$.
\medskip

In a case when  $X\subset Z$ and $X$ is a free $\mathbb Z_2$-space we can double the space $Z$ and obtain
the space $D(X,Z)$ with a free involution such that $D(X,Z)$ is the union of two subspaces homeomorphic to $Z$ and
$X\subset D(X,Z)$ is an invariant subspace, see~\cite{MusS}. The above results can be obtained using this space $D(X,Z)$.

It is convenient to explain the construction in more general situation when $X$ is a free $G$-space where $G$ is finite.
Let $X$ be a compact subspace of a compact space $Z$ (no action of $G$ on $Z\setminus X$ assumed). Denote by $i:X\to Z$ the inclusion.
Then there exists a compact space $D(X,Z)$, {\it camomile}, with a free $G$-action and the inclusion $j:Z\to D(X,Z)$ such that
$j\circ i:X\to D(X,Z)$ is equivariant and $D(X,Z)$ is a disjoint union
$$D(X,Z)=j(X)\bigcup \bigcup\limits_{g\in G}gj(Z\setminus X).$$
We explain the construction for simplicial complexes.

In case when $Z$ is a simplicial complex and $X$ its subcomplex (or more general for $CW$-complexes) the space $D(X,Z)$ is obtained by the well known
procedure of equivariant attachments of cells. Let $\varphi:\mathbb S^{k-1}\to Y$ be any continuous map to a $G$-space $Y$. Consider $|G|$ maps
$g\circ \varphi:\mathbb S^{k-1}\to Y$, $g\in G$. Attach $k$-dimensional cells to $Y$ using this maps. Since a ball is a cone over its boundary,
we can easily extend the $G$-action on the space $Y$ with attached cells. Consider the disjoint union of $X$ with
$|G|$ copies of the set of 0-dimensional simplices (0-cells) in $Z\setminus X$ and associate with each copy an element of $G$.
A free action on the defined space is easily defined. Attach further 1-cells to the union of $X$ and a set associated with the unit of $G$
and make equivariant attachment of 1-cells as explained above. Successive equivariant attachment of cells of dimensions 2, 3 etc. gives us
the camomile $D(X,Z)$. In simplicial case we can make all maps simplicial, so $D(X,Z)$ will be a simplicial complex with simplicial action of $G$.

\begin{remark}{\rm Let $Y$ be a free $\mathbb Z_2$-space and $\tind Y=d$. Then $Y$ can be represented as $Y=D(X,Z)$ with $X$ such that $\tind X=d-1$.
Indeed there exists equivariant map $f:Y\to \mathbb S^d$ and we denote by $Z$ the preimage of upper hemisphere and $X$ the preimage of the equator.}
\end{remark}

\begin{prop} Assume that $\hind\!_2 X\geq d-1$ and $0=i^*:H^{d-1}(Z;\mathbb Z_2)\to H^{d-1}(X;\mathbb Z_2)$.
Then $\hind_2 D(X,Z)\geq d$.
\end{prop}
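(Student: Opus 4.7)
The plan is to apply Properties~1 and~8 of the mod-$2$ cohomological index $\hind\!_2$ to the equivariant inclusion of $X$ into $D(X,Z)$, using the factorization through $Z$ to exploit the vanishing hypothesis on $i^*$.

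First, by the construction of the camomile, $X$ embeds as an invariant subspace of $D(X,Z)$, and this embedding factors as $X \xrightarrow{i} Z \xrightarrow{j} D(X,Z)$, where $j$ is the inclusion of one of the two copies of $Z$ glued along $X$. Although $j$ itself need not be equivariant, the composite $j\circ i$ is, since its image lies inside the invariant subspace $X\subset D(X,Z)$ (this is precisely the property recorded in the definition of the camomile above). Property~1 applied to $j\circ i$ gives $\hind\!_2 D(X,Z) \ge \hind\!_2 X \ge d-1$. If in fact $\hind\!_2 X\ge d$, the conclusion is immediate; so I reduce to the case $\hind\!_2 X = d-1$, where it only remains to exclude the possibility $\hind\!_2 D(X,Z)=d-1$.

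I would argue by contradiction: suppose $\hind\!_2 D(X,Z) = d-1$. Both indices are finite by Property~5, since $Z$ (hence $D(X,Z)$) and $X$ are compact. Property~8 applied to the equivariant map $j\circ i\colon X \to D(X,Z)$ then forces the induced homomorphism $(j\circ i)^* = i^*\circ j^*\colon H^{d-1}(D(X,Z);\mathbb Z_2) \to H^{d-1}(X;\mathbb Z_2)$ to be nonzero. But by hypothesis $i^*\colon H^{d-1}(Z;\mathbb Z_2)\to H^{d-1}(X;\mathbb Z_2)$ vanishes, so the composite $i^*\circ j^*$ is zero. This contradiction completes the proof.

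The only subtlety is the equivariance of $j\circ i$, but this is built into the construction of the camomile ($X$ is the hub, on which the involution of $D(X,Z)$ restricts to the given one). I therefore do not foresee any deeper obstacle; the argument is essentially a two-line functorial consequence of Properties~1 and~8 of $\hind\!_2$.
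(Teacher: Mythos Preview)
Your argument is correct and is precisely the approach the paper intends: it is the same reasoning as in the proof of Theorem~4.1 (with $K=D(X,Z)$ and $f=j$), namely use Property~1 to get $\hind\!_2 D(X,Z)\ge d-1$ and then Property~8 together with the factorization $(j\circ i)^*=i^*\circ j^*=0$ to rule out equality. The paper in fact states Proposition~4.3 without a separate proof, since it is an immediate instance of Theorem~4.1.
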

\begin{prop} Assume that $\hind_{\mathbb Z} X\geq d-1$ and $0=i^*:H^{d-1}(Z;\mathbb Z)\to H^{d-1}(X;\mathbb Z)$.
Then $\hind_{\mathbb Z} D(X,Z)\geq d$.
\end{prop}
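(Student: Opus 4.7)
My plan is to run the same contradiction argument that would prove the preceding $\mathbb{Z}_2$-version (Proposition~4.5), now using the integer cohomology variant of Property~8. The inclusion $j\circ i\colon X\to D(X,Z)$ is equivariant by construction of the camomile (the $G=\mathbb{Z}_2$-action on $D(X,Z)$ restricts to the given free involution on $j(X)$), so it is the natural morphism of free $\mathbb{Z}_2$-spaces to exploit.

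I proceed by contradiction: suppose $\hind_{\mathbb Z} D(X,Z)\le d-1$. Applying Property~1 (monotonicity under equivariant maps) to $j\circ i$, I get
\[
d-1\le \hind_{\mathbb Z} X \le \hind_{\mathbb Z} D(X,Z)\le d-1,
\]
so both indices equal $d-1$ and are finite. Now Property~8 (for integer coefficients, which was proved in the excerpt via the Richardson--Smith sequences) forces
\[
(j\circ i)^*\colon H^{d-1}(D(X,Z);\mathbb Z)\to H^{d-1}(X;\mathbb Z)
\]
to be nonzero. On the other hand, functoriality gives the factorization $(j\circ i)^*=i^*\circ j^*$, and the hypothesis $i^*=0$ on $H^{d-1}$ kills this composite. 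This contradiction yields $\hind_{\mathbb Z} D(X,Z)\ge d$.

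The only subtle point — and the main thing to verify carefully — is that the map $j\circ i\colon X\hookrightarrow D(X,Z)$ really is $\mathbb{Z}_2$-equivariant, so that Property~8 is applicable. This is built into the camomile construction recalled in the excerpt: $D(X,Z)$ is defined so that $j\circ i$ is equivariant and $j(X)$ is an invariant subspace. Everything else is formal, consisting of invoking Properties~1 and~8 and the functoriality $i^*\circ j^*=(j\circ i)^*$; no Mayer--Vietoris or direct index computation on $D(X,Z)$ is needed. The proof of the earlier $\mathbb{Z}_2$-coefficient proposition is literally the same argument with $\hind\!_2$ and $\mathbb Z_2$-coefficients in place of $\hind_{\mathbb Z}$ and $\mathbb Z$-coefficients.
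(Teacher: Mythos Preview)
Your proof is correct and follows essentially the same approach as the paper. The paper does not spell out a separate proof for this proposition, but the argument implicit from the surrounding material (specifically the proof of Theorem~4.2, the $\mathbb Z$-coefficient analogue of Theorem~4.1) is exactly yours: the equivariant inclusion $j\circ i\colon X\to D(X,Z)$ forces $\hind_{\mathbb Z} D(X,Z)\ge d-1$ by monotonicity, and then Property~8 together with the factorization $(j\circ i)^*=i^*\circ j^*=0$ rules out equality.
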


Consider $X=\mathbb S^{d-1}$ with the antipodal (or any other free) involution and let $i:X\to Z$ be an inclusion such that
$0=i^*:H^{d-1}(Z;\mathbb Z_2)\to H^{d-1}(\mathbb S^{d-1};\mathbb Z_2)$. It follows from the above proposition that $\hind\!_2 D(X,Z)\geq d$.

In particular consider a closed connected topological $d$-dimensional manifold $M$. Denote by $Z$ the manifold $M$ with deleted open ball and by
$X$ the boundary of this ball which is homeomorphic to $\Bbb S^{d-1}$. Then $D(X,Z)$ is the connected sum $M\# M$, so
$M\#M=M_1\cup M_2$ where $M_1$ is mapped to $M_2$ and $M_2$ to $M_1$ by the involution on $M\#M$ and
$M_1\cap M_2=\mathbb S^{d-1}$. Since $M$ is closed, the inclusion $i:\mathbb S^{d-1}\to M_1$ induces the trivial homomorphism of $(d-1)$-dimensional
cohomology and we obtain that $\hind\!_2 M\#M=d$.

For a pseudomanifold or more general for a connected $d$-dimensional simplicial complex $K$ we can define $K\# K$ by deleting a small open ball
from the interior of some chosen $d$-simplex (the construction depends on this choice).

Thus we have the following statement.

\begin{corr} Let $M$ be a closed connected topological manifold or a closed pseudomanifold. Then $\hind\!_2 M\#M=\dim M$.
\end{corr}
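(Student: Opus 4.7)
The plan is to realize $M\# M$ as a camomile and apply the preceding Proposition. I would set $X=\mathbb S^{d-1}$ (with the antipodal involution) and $Z = M_1 := M \setminus \mathrm{int}(B)$, where $B$ is a small closed $d$-ball embedded in the interior of a chosen top-dimensional simplex of $M$; then $D(X,Z)$ is canonically identified with $M\# M$, just as the paper has already indicated in the manifold case. The Proposition will then yield the lower bound $\hind_2 M\# M \ge d$ provided (i) $\hind_2 \mathbb S^{d-1} = d-1$ (standard) and (ii) the inclusion $i\colon \mathbb S^{d-1}\hookrightarrow M_1$ induces the trivial map $i^*\colon H^{d-1}(M_1;\mathbb Z_2)\to H^{d-1}(\mathbb S^{d-1};\mathbb Z_2)$. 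The matching upper bound $\hind_2 M\# M \le d$ is immediate from Property~7, since $M\# M$ is $d$-dimensional with $\mathbb Z_2$-cohomology vanishing above degree~$d$.

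For the verification of (ii), the natural tool is the Mayer--Vietoris sequence for $M = M_1 \cup B$. Using $H^{d-1}(B) = H^d(B) = H^d(\mathbb S^{d-1};\mathbb Z_2) = 0$, it collapses to the exact sequence
\[
H^{d-1}(M_1;\mathbb Z_2) \xrightarrow{\,i^*\,} H^{d-1}(\mathbb S^{d-1};\mathbb Z_2) \longrightarrow H^d(M;\mathbb Z_2) \longrightarrow H^d(M_1;\mathbb Z_2) \longrightarrow 0.
\]
Since $M$ is closed and connected, $H^d(M;\mathbb Z_2)\cong \mathbb Z_2$ via the mod-$2$ fundamental class. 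Once one knows $H^d(M_1;\mathbb Z_2) = 0$, the middle connecting map will be a surjection $\mathbb Z_2\twoheadrightarrow\mathbb Z_2$, hence an isomorphism, which forces $i^* = 0$ as required.

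The main obstacle will be establishing $H^d(M_1;\mathbb Z_2) = 0$ uniformly for both classes of $M$. For a manifold, $M_1$ is a compact connected $d$-manifold with nonempty boundary $\mathbb S^{d-1}$, so Lefschetz duality gives $H^d(M_1;\mathbb Z_2)\cong H_0(M_1,\partial M_1;\mathbb Z_2) = 0$. For a pseudomanifold the plan is to pass to the barycentric subdivision and replace $M_1$ by the homotopy equivalent subcomplex $M_1'$ obtained by removing the open star of the barycenter $p$ of the chosen top simplex. Each $(d-1)$-face lying on $\mathrm{link}(p)\cong \mathbb S^{d-1}$ is then incident in $M_1'$ to exactly one top simplex. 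Since the deletion is confined to a single top simplex of $M$, strong connectivity of $M$ descends to strong connectivity of the dual graph of $M_1'$, and the standard propagation argument then shows that any mod-$2$ $d$-cycle on $M_1'$ must vanish on every top simplex adjacent to $\mathrm{link}(p)$, and hence everywhere. Thus $H_d(M_1';\mathbb Z_2) = 0$, and since we work over the field $\mathbb Z_2$, $H^d(M_1;\mathbb Z_2) = 0$, completing the plan.
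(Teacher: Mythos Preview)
Your approach is exactly the paper's: realize $M\#M$ as the camomile $D(\mathbb S^{d-1},M_1)$ with $M_1=M\setminus\mathrm{int}(B)$ and invoke the preceding Proposition, the upper bound coming from Property~7. The paper simply asserts that $i^*=0$ on $H^{d-1}$ ``since $M$ is closed''; your Mayer--Vietoris computation and the reduction to $H^d(M_1;\mathbb Z_2)=0$ fill in what the paper leaves implicit.

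One small caveat in the pseudomanifold paragraph: the claim that strong connectivity of $M$ ``descends'' to strong connectivity of $M_1'$ is not justified, and need not hold. Nothing rules out the chosen top simplex $\sigma$ being a cut vertex in the dual graph of $M$, and barycentric subdivision does not repair this: in a flag move the top entry $v_d$ can only change to an adjacent $d$-simplex in the dual graph of $M$, so the projection to $v_d$ shows the dual graph of $M_1'$ has exactly as many components as the dual graph of $M$ with $\sigma$ deleted. The fix is immediate and avoids any connectivity analysis of $M_1'$: a mod-$2$ $d$-cycle supported in $M_1'$ is already a $d$-cycle in the closed pseudomanifold $M'$, whose only mod-$2$ $d$-cycles are $0$ and the fundamental class $[M']$; since $[M']$ is not supported in $M_1'$, the cycle must vanish. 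This gives $H_d(M_1';\mathbb Z_2)=0$ and hence $H^d(M_1;\mathbb Z_2)=0$, and the rest of your argument goes through unchanged.
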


%%%%%%%%%%%%%%%%%%%%%%%%%%%%%%%%%%%%%%%%%%%%%%%%%%%%%%%%%%%%%%%%%%%%%%%%%%%%%%%%%%%%%%%%%%%%%%%%%%%%%%%%%%%%%%%%%%

%%%%%%%%%%%%%%%%%%%%%%%%%%%%%%%%%%%%%%%%%%%%%%%%%%%%%%%%%%%%%%%%%%%%%%%%%%%%%%%%%%%%%%%%%%%%%%%%%%%%%%%%%%%%%%%%%%

%%%%%%%%%%%%%%%%%%%%%%%%%%%%%%%%%%%%%%%%%%%%%%%%%%%%%%%%%%%%%%%%%%%%%%%%%%%%%%%%%%%%%%%%%%%%%%%%%%%%%%%%%%%%%%%%%%

The following result is similar to the results from Section 2.

\begin{theorem} \label{t42}  Let $Z$ be a finite simplicial complex and $X$ is its a subcomplex which is a free $\mathbb Z_2$-space with a free involution $A$. Assume that at least one of the following conditions holds:

a) $\hind\!_{2} X\geq d-1$ and $0=i^*:H^{d-1}(Z;\mathbb Z_2)\to H^{d-1}(X;\mathbb Z_2)$,

b) $\hind_{\mathbb Z} X\geq d-1$ and $0=i^*:H^{d-1}(Z;\mathbb Z)\to H^{d-1}(X;\mathbb Z)$.

Then the following statements hold.
\begin{enumerate}
\item For any covering of $Z$ by  a family of $2n$ closed (respectively, of  $2n$ open) sets $\{C_1,C_{-1},\ldots,C_n,C_{-n}\}$, where $A(C_i)\cap X=C_{-i}\cap X$ for all $i=1,\ldots,n$, there is $k$ such that $C_k$ and $C_{-k}$ have a common intersection point. 	
	
\item For  any triangulation $\cal T$ of  $Z$ that is equivariant on $X$ and a $\Pi_m$-labeling of the vertex set $V({\cal T})$ of triangulation $\cal T$
which is antipodal on $X$ there exists a complementary edge in $\cal T$.

\item  For  any triangulation $\cal T$ of  $Z$ that is antipodal on $X$ and for any  labeling
$L:V({\cal T})\to \Pi_m$ without   complementary edges  which is antipodal on $X$ there is an $n$-simplex in $\cal T$ with labels in the form $\{k_0,-k_1,k_2,\ldots,(-1)^nk_n\}$, where  $1\le |k_0|<|k_1|<\ldots<|k_n|\le m$ and all $k_i$ are of the same sign.

\item  For any  centrally symmetric set of\, $2m$ points $\{p_1,p_{-1},\ldots,p_m,p_{-m}\},\, p_{-k}=-p_k,$ in ${\Bbb R}^n$, for any  triangulation $\cal T$ of $Z$ that is antipodal on $X$ , and for any labeling  $L:V({\cal T})\to \Pi_m$ that is antipodal on the boundary, there exists a $k$-simplex in $\cal T$ with labels $\ell_0,\ldots,\ell_ k$ such that the convex hull of points  $\{p_{\ell_0},\ldots,p_{\ell_k}\}\subset{\Bbb R}^n$ contains the origin $0\in {\Bbb R}^n$.

\item For any set-valued function $F:Z\to 2^{{\Bbb R}^n}$  with a closed graph  such that for all $z\in Z$ the set $F(z)$ is non-empty compact and convex in ${\Bbb R}^n$, and for points $x\in X$ the set $F(x)$ contains $y$ with $(-y)\in F(A(x))$, there is $x_0\in X$ such that  $F(x_0)$ covers the origin $0\in{\Bbb R}^n$.

\end{enumerate}
\end{theorem}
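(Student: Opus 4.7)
The unifying observation is that Propositions~4.1 and 4.2 above play the role of a relative Borsuk--Ulam theorem: any continuous $f\colon Z\to\mathbb R^n$ that is equivariant on $X$ must hit $0$, provided the index assumption on $X$ and the vanishing $i^*=0$ both hold. So my plan is to re-run the proofs of Theorems~\ref{DBUT} and~\ref{KBUT}, replacing the BUT hypothesis by these two propositions.

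For statements 2, 3, 4 the scheme is uniform. Attach to each label $i\in\Pi_m$ a vector $v_i\in\mathbb R^n$: for (2) take $v_{\pm k}=\pm e_k$; for (3) take the vertices of the ASC$(m,n)$ polytope of \cite[Theorem~5.2]{MusSpT}; for (4) take the given points $p_{\pm k}$. Piecewise-linear extension over ${\cal T}$ yields a map $f\colon Z\to\mathbb R^n$, antipodal on $X$ by the hypothesis on $L$. If the desired combinatorial configuration (complementary edge, alternating $n$-simplex, rainbow $k$-simplex containing $0$) failed to exist, then $0\notin\conv\{v_{L(v)}:v\in s\}$ for every simplex $s$, so $0\notin f(Z)$, contradicting Proposition~4.1 or 4.2. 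This is exactly the Sarkaria-style reduction used in Theorem~\ref{DBUT}.

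Statement 1 reduces to statement 2 by the standard conversion of an equivariant (on $X$) cover to a Tucker labeling on a sufficiently fine subdivision, as in the equivalence $2\Leftrightarrow 3$ of Theorem~\ref{tBUTS}; the complementary edge furnished by (2) produces the required point of $C_k\cap C_{-k}$ after passing to the limit of refinements and using compactness.

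For statement 5 I follow the proof of Theorem~\ref{KBUT}. Take triangulations ${\cal T}_i$ of $Z$ that are antipodal on $X$ and whose mesh tends to zero; at each vertex $v$ choose $y_v\in F(v)$, pairing the choices antipodally on $X$ via the hypothesis on $F|_X$ so that $y_{Av}=-y_v$; piecewise-linear extension yields $f_i\colon Z\to\mathbb R^n$ equivariant on $X$. By Proposition~4.1 or 4.2 pick $z_i\in f_i^{-1}(0)$, extract a subsequential limit $z_i\to x_0$, and use the closed-graph and convexity of $F$ to conclude $0\in F(x_0)$. I expect the main obstacle to be forcing $x_0\in X$ rather than merely $x_0\in Z$, since the equivariance of $f_i$ holds only on~$X$; a natural way around this is to re-do the construction on the camomile $D(X,Z)$, whose index is at least $d$ by the propositions in this section, and then observe that the free $\mathbb Z_2$-action on $D(X,Z)$ concentrates the zeros on the glued copy of~$X$ in the limit.
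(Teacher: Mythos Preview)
Your reduction of items 1--4 to Propositions~4.1/4.2 via the piecewise-linear map attached to a labeling is exactly the route the paper takes (implicitly: no proof is written out, but Corollary~4.1 already does item~2, and the remaining items are meant to follow from the same mechanism together with the arguments cited in Theorem~\ref{DBUT}). Your treatment of item~5 also matches the paper's Remark following the theorem, which says explicitly that statement~5 ``can be proved by the same argument as Theorem~\ref{KBUT}''.

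The one place your proposal goes off the rails is the last sentence. The camomile $D(X,Z)$ indeed has $\hind\!_2\ge d$ by Proposition~4.3, so Theorem~\ref{KBUT} applies to it; but the zero it produces lies in $D(X,Z)$, hence (up to the involution) in $Z$, and there is no mechanism by which the free action ``concentrates the zeros on the glued copy of $X$ in the limit''. That claim is simply false in general: take $Z=\mathbb D^n$, $X=\mathbb S^{n-1}$, and $F$ single-valued and odd on $X$; the unique zero can sit anywhere in the interior of the disk. So your argument yields $x_0\in Z$, not $x_0\in X$, and that is all one should expect from the stated hypotheses. The paper is aware of this: its Remark notes that item~5 ``in this form cannot be directly derived from Theorem~\ref{KBUT}'', and then offers a Vietoris--Begle alternative that does land in $X$ but under the \emph{stronger} hypothesis $F(Ax)=-F(x)$ (so that $\Gamma_X$ itself carries a free involution with $\hind\!_2\Gamma_X=\hind\!_2 X$). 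In short: drop the final sentence, be content with $x_0\in Z$, and regard the ``$x_0\in X$'' in the statement either as a typo for $Z$ or as requiring the stronger equivariance on $F|_X$ spelled out in the Remark.
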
	

\begin{remark} {\rm Actually, statement 5 in this form cannot be directly derived from Theorem~\ref{KBUT}. However, it can be proved by the same argument as Theorem \ref{KBUT}. On the other hand we can apply homological methods in multivalued function theory~\cite{Gor}. For example assume condition a) and acyclicity
 (for cohomology with coefficients in $\mathbb Z_2$) of spaces $F(z)$, $z\in Z$ and assume that $F(Ax)=-F(x)$ for all $x\in X$. Denote by $\Gamma\subset Z\times \mathbb R^n$ the graph of $F$ and by $\Gamma_X\subset X\times \mathbb R^n$ the graph of $F|_X$. We have a free involution $A$ on $\Gamma_X$ defined as
$A(x,y)=(Ax,-y)$, $x\in X$ and $y\in F(x)$, and an equivariant map $g:\Gamma_X\to \mathbb R^n$, $g(x,y)=y$.
From Vietoris--Begle theorem it follows that $\hind\!_2 \Gamma_X=\hind\!_2 X$. Hence for some pair $(x_0,y_0)$ we have $g(x_0,y_0)=0$, i.e. $y_0=0$, and therefore $0\in F(x_0)$.
}
\end{remark}

%%%%%%%%%%%%%%%%%%%%%%%%%%%%%%%%%%%%%%%%%%%%%%%%%%%%%%%%%%%%%%%%%%%%%%%%%%%%%%%%%%%%%%%%%%%%%%%%%%%%%%%%%%%%%%
%%%%%%%%%%%%%%%%%%%%%%%%%%%%%%%%%%%%%%%%%%%%%%%%%%%%%%%%%%%%%%%%%%%%%%%%%%%%%%%%%%%%%%%%%%%%%%%%%%%%%%%%%%%%%%
\section{BUT--manifolds}
In this section a manifold means either
connected topological manifold, or closed pseudomanifold.
The involutions on pseudomanifolds are assumed to be simplicial.

First recall the notion of the degree of a map between manifolds.

Let $M$, $N$ be such manifolds of dimension $d$ and without boundary. Consider a map $h:M\to N$.
Then $\deg_2 h$, mod 2 degree of $h$, and $\deg h$ in a case when $M$, $N$ are orientable, are defined. We have
$H^d(M;\mathbb Z_2)\cong \mathbb Z_2\cong H^d(N;\mathbb Z_2)$ and we put $\deg_2 h=1$  (respectively $\deg_2 h=0$) if
$h^*\not=0$ (respectively $h^*=0$) where $h^*:H^d(N;\mathbb Z_2)\to H^d(M;\mathbb Z_2)$ is a homomorphism induced by $h$.
If $M$, $N$ are orientable then $H^d(M;\mathbb Z)\cong \mathbb Z\cong H^d(N;\mathbb Z)$ and $\deg h\in \mathbb Z$ is defined
from the equation $h^*\alpha_N=\deg h\cdot\alpha_M$ where $\alpha_M, \alpha_N$ are generators (depending on orientations of manifolds).
Note that the property of the degree to be odd or even does not depend on a choice of orientations and $\deg_2 h=\deg h\,\,(\mbox{mod }\, 2)$.

Recall that if $M$, $N$ and involutions on them are smooth then $\deg_2 h$ (respectively $\deg h$) equals the number modulo 2
(respectively algebraic number) of points in the preimage of any regular value of $h$.

In the case when $M$, $N$ are pseudomanifolds and $h:M\to N$ is simplicial the degree of $h$ can be defined as follows: $\deg_2 h$ equals the number modulo 2
of simplices in the preimage of
any $d$-dimensional simplex in $N$ (simplices with falling dimension under the map $h$ are not taken into account).
In the case when $M$, $N$ are orientable fix the orientation of these manifolds and a simplex $\sigma \subset N$.
Then $\deg h$ equals the difference between the number of simplices
in the preimage of $\sigma $ for which $h$ preserves orientations and number of
simplices for which $h$ reverses orientations.

\medskip

It is well known that Borsuk--Ulam theorem is equivalent to the following assertion:

{\it Every continuous odd mapping ${\Bbb S}^d\to {\Bbb S}^d$  has odd degree.}

Here odd or antipodal means equivariant with respect to the antipodal involution on ${\Bbb S}^d$.

\medskip

\begin{theorem}\label{odddeg} Let $M$, $N$ be $d$-dimensional $\but_d$-manifolds, i.e. $\tind M=d=\tind M$. If
there exists an equivariant map $f:M\to N$ then $\deg_2 f=1$.

If $M$, $N$ are orientable then \,$\deg f$ is odd.
\end{theorem}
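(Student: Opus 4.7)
The plan is to deduce both assertions from Property~8 of the mod~2 cohomological index $\hind_2$, once we have promoted the hypothesis on the topological index $\tind$ to the corresponding statement about $\hind_2$. Since $M$ and $N$ are either closed topological manifolds or closed pseudomanifolds with a free involution, Corollary~3.1 (or Corollary~3.2 in the pseudomanifold case) applies: the equality $\tind M=d$ is equivalent to $\hind_2 M=d$, and similarly $\hind_2 N=d$. So after this translation we know $\hind_2 M=\hind_2 N=d$ and we have an equivariant map $f:M\to N$.

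Next, I would apply Property~8 directly: since $f$ is equivariant and $\hind_2 M=\hind_2 N=d<\infty$, the induced homomorphism
$$
f^*:H^d(N;\mathbb Z_2)\longrightarrow H^d(M;\mathbb Z_2)
$$
is nonzero. For a closed connected $d$-dimensional topological manifold or a closed $d$-dimensional pseudomanifold (which is strongly connected by definition) the top mod~2 cohomology group is $\mathbb Z_2$; hence a nonzero $f^*$ must send the generator of $H^d(N;\mathbb Z_2)$ to the generator of $H^d(M;\mathbb Z_2)$. By the definition of mod~2 degree recalled just before the theorem, this is precisely the statement $\deg_2 f=1$.

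For the orientable case I would simply invoke the general relation $\deg_2 f\equiv \deg f\pmod 2$, also recalled in the paragraph preceding the theorem. Since $\deg_2 f=1$, the integer $\deg f$ must be odd. No separate argument using $\hind_{\mathbb Z}$ is required, although one could alternatively run the same argument with $\hind_{\mathbb Z}$ (which also equals $d$ for both manifolds via the chain $\hind_2\le \hind_{\mathbb Z}\le \tind$) to conclude $f^*:H^d(N;\mathbb Z)\to H^d(M;\mathbb Z)$ is nonzero and then note that a nonzero endomorphism of $\mathbb Z$ must map $1$ to a nonzero integer, whose reduction modulo~$2$ is forced to be $1$ by the $\mathbb Z_2$ argument.

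The only point that deserves a line of comment in the write-up is the identification of nonvanishing of $f^*$ on $H^d$ with $\deg_2 f=1$: this is trivial in the manifold setting, while in the pseudomanifold setting it uses strong connectivity (so that $H^d(\cdot;\mathbb Z_2)=\mathbb Z_2$) together with the simplicial description of degree given in the preamble. Everything else is a direct quotation of Property~8 and the earlier corollary, so I would expect no serious obstacle.
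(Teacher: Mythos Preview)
Your argument is correct and essentially identical to the paper's: convert $\tind=d$ to $\hind_2=d$ via Corollary~3.1/3.2, then invoke Property~8 to get $f^*\ne 0$ on top $\mathbb Z_2$-cohomology, which is precisely $\deg_2 f=1$; the orientable case follows from $\deg_2 f\equiv\deg f\pmod 2$. The paper's proof is the same two lines, only with less detail spelled out.
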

\begin{proof} Since $\tind M=d=\tind M$, we have $\hind\!_2 M=d=\hind\!_2 M$, and the statement follows
directly from the definition of degrees and property~8 of index $\hind\!_2 (\,\cdot\,)$.
\end{proof}

\begin{remark}{\rm
Let $M$ be a $d$-dimensional manifold with a free involution. Then there exists an equivariant map $h:M\to \mathbb S^d$ and Theorem~\ref{odddeg}
implies that if $M$ is
$\but_d$ then $\deg_2 h=1$. If moreover $M$ is orientable then \,$\deg h$ is odd. Actually this statement is equivalent to the Theorem~\ref{odddeg}. In fact
take any equivariant map $g:N\to \mathbb S^d$. Applying Theorem~\ref{odddeg} to the equivariant map $h=g\circ f:M\to \mathbb S^d$
we see that  $\deg_2 g\circ f=1$. Hence $\deg_2 f=1$. Now we can give another proof of Theorem~\ref{odddeg} using the following
Conner--Floyd type generalization~\cite{CF} of the Borsuk--Ulam theorem:

{\it Let $X$ be a space with a free involution $A$ and such that $\hind\!_2 X\ge d$. Let $R$ be a $d$-dimensional
topological manifold and $\varphi:X\to R$ a map such that
$\varphi^*$, the homomorphism of cohomology with coefficients in $\mathbb Z_2$, is trivial in all positive dimensions. Then there exists $x\in X$ such that
$\varphi(x)=\varphi(Ax)$.}

This statement shows that the supposition $\deg_2 h=0$ leads to a contradiction ($h:M\to \mathbb S^d$ cannot be equivariant).
}
\end{remark}

\medskip
From theorems  \ref{tEQUIV}, \ref{odddeg} we obtain:
\begin{theorem} Let $h:M\to \mathbb S^d$ be an equivariant map of
a $d$-dimensional manifold $M$. Then $M$ is $\but_d$, i.e. $\tind M=d$, if and only if \,$\deg_2 h=1$.

If moreover $M$ is orientable then it is $\but_d$ if and only if \,$\deg h$ is odd.
\end{theorem}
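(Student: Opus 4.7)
The plan is to deduce both equivalences by pairing Theorem~\ref{odddeg} (the odd-degree statement) with Theorem~\ref{tEQUIV} (the cohomological recognition principle), relying on the already-established equivalence $\tind M = \dim M$ iff $\hind\!_2 M = \dim M$ for closed topological manifolds and closed pseudomanifolds (Corollaries~3.1 and~3.2). I would first observe that equivariance of $h\colon M \to \mathbb S^d$ automatically forces the involution on $M$ to be free, since the antipodal action on $\mathbb S^d$ has no fixed points, so the framework of the index results applies without extra hypotheses.

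For the forward implication, if $\tind M = d$ then, since $\tind \mathbb S^d = d$ as well, both $M$ and $\mathbb S^d$ are $\but_d$-manifolds, so Theorem~\ref{odddeg} applied to $h$ yields $\deg_2 h = 1$ immediately, and also $\deg h$ odd when $M$ is orientable (as $\mathbb S^d$ is orientable).

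For the converse I would feed $h$ into Theorem~\ref{tEQUIV} with $(X,Y,k) = (M,\mathbb S^d,d)$. The hypothesis $\deg_2 h = 1$ is exactly the statement that $h^*\colon H^d(\mathbb S^d;\mathbb Z_2) \to H^d(M;\mathbb Z_2)$ is nonzero; since both groups are $\mathbb Z_2$ for a closed connected $d$-dimensional manifold or pseudomanifold, nonzero means isomorphism, giving condition~(d). Conditions~(a), (b), (c) follow at once from $\hind\!_2 \mathbb S^d = d$, $\dim M = d$, and the cohomology computation just made. Theorem~\ref{tEQUIV} then delivers $\hind\!_2 M = d$, and Corollaries~3.1--3.2 promote this to $\tind M = d$, i.e.\ $M$ is $\but_d$. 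The orientable converse is automatic: $\deg h$ odd implies $\deg_2 h = 1$, so the mod-$2$ case just proved applies.

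The only point that needs a moment of care is the uniform verification of $H^d(M;\mathbb Z_2) \cong \mathbb Z_2$ across both flavours of manifold --- $\mathbb Z_2$-Poincar\'e duality in the topological case, the existence of a $\mathbb Z_2$-fundamental class in the pseudomanifold case --- but this is standard and was already implicit in the statements of the corollaries invoked. Beyond that, no genuine obstacle arises: the theorem is essentially a dictionary translating between the mod-$2$ (or integral) degree of $h$ and the $\mathbb Z_2$-cohomological index of $M$, with Theorem~\ref{odddeg} supplying one direction of the dictionary and Theorem~\ref{tEQUIV} the other.
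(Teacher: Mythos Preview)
Your proof is correct and follows exactly the route the paper takes: the authors simply write ``From theorems~\ref{tEQUIV}, \ref{odddeg} we obtain'' and state the result, so your explicit unpacking of how Theorem~\ref{odddeg} handles the forward direction and Theorem~\ref{tEQUIV} handles the converse is precisely what they have in mind. One minor simplification: after Theorem~\ref{tEQUIV} gives $\hind\!_2 M = d$, you do not need Corollaries~3.1--3.2 to promote this to $\tind M = d$; the standing inequalities $\hind\!_2 M \le \tind M \le \dim M = d$ already force equality.
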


For a smooth $d$--dimensional manifold $M$ (with a smooth free involution) and a smooth antipodal map $h:M \to {\Bbb R}^d$ we say that $h$ is
transversal to zero if $0\in {\Bbb R}^d$ is a regular value of $h$. In this case $h^{-1}(0)$ consists of finite number of points. To construct
such a map $h$ choose any smooth equivariant map $f:M\to \mathbb S^d$ such that North and South poles of $\mathbb S^d\subset \mathbb R^{d+1}$
are regular values of $f$. Note that $0\in \mathbb R^{d}$ is a regular value of the projection $\pi:\mathbb S^d\to\mathbb R^{d}$ where
$\pi(x_1,\dots,x_{d+1})=(x_1,\dots,x_{d})$,\, $x_i$ are coordinates in $\mathbb R^{d+1}$. Hence $0\in \mathbb R^{d}$ is a regular value of
the composition map $\pi\circ f:M\to\mathbb R^{d}$. We see that if $\deg\!_2 f=1$ then the preimage of the North (South) pole under $f$ consists of $2k+1$ points and therefore $|h^{-1}(0)|=4k+2$.

In a case $M$ is a pseudomanifold we say that $h:M \to {\Bbb R}^d$ is
transversal to zero if the map $h$ is linear on each simplex and $h^{-1}(0)$ consists only from points lying in the interiors of $d$-simplexes.
To show the existence of such a map we need only to construct for any antipodal triangulation of $\mathbb S^d$ a transversal to zero map $S^d\to\mathbb R^{d}$.

Consider any triangulation of the sphere $\mathbb S^d$ invariant under the antipodal involution, so consider the sphere as a boundary of convex centrally symmetric polytope. Take any interior point $x$ of an arbitrary $d$-simplex.
Orthogonal projection of a sphere $\mathbb S^d\subset\mathbb R^{d+1}$ on the linear $d$-dimensional subspace orthogonal to a line through points $x$ and $-x$ is an equivariant transversal to zero map $S^d\to\mathbb R^{d}$, and the preimage of zero for this map consists of exactly two points $x$ and $-x$. It follows
that $|h^{-1}(0)|=4k+2$ for a transversal to zero if the map $h:M \to {\Bbb R}^d$.

Thus we have proved
\begin{theorem} Let $M$ be either a connected closed smooth $d$--dimensional manifold with a free smooth involution
 or closed pseudomanifold with  a free  simplicial involution.
Then $M$ is a $\but_d$-space if and only if $M$ admits an antipodal transversal to zero mapping $h:M \to {\Bbb R}^d$ with the cardinality of zeros set $|h^{-1}(0)|=4k+2$, where $k\in \{0, 1, \dots\}$.
\end{theorem}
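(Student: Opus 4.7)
The proof splits into two directions; the forward implication is essentially the construction sketched just above the statement, which I would formalize as follows. If $M$ is a $\but_d$-manifold then $\tind M = d$, so an equivariant map $f: M \to \mathbb{S}^d$ exists; by an equivariant perturbation (smooth in the manifold case, simplicial via the orthogonal projection construction from the text for pseudomanifolds) we arrange the poles $N, S$ of $\mathbb{S}^d$ to be regular values of $f$. The preceding theorem yields $\deg_2 f = 1$, forcing $|f^{-1}(N)| = 2k+1$ odd, and equivariance gives $|f^{-1}(S)| = |A(f^{-1}(N))| = 2k+1$. Composing with the antipodal projection $\pi: \mathbb{S}^d \to \mathbb{R}^d$ that drops the last coordinate gives an antipodal map $h = \pi \circ f$ which is transversal to zero because $\pi$ has $0$ as a regular value with $\pi^{-1}(0) = \{N, S\}$; hence $|h^{-1}(0)| = 2(2k+1) = 4k+2$.

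For the reverse implication I argue by contrapositive. Suppose $M$ is not $\but_d$, so that some antipodal map $g: M \to \mathbb{R}^d \setminus \{0\}$ exists. Given any antipodal $h: M \to \mathbb{R}^d$ transversal to zero, form the antipodal linear homotopy $H(x,t) = (1-t)h(x) + tg(x)$. After a small equivariant perturbation leaving $t=0,1$ unchanged, we may assume $H: M \times [0,1] \to \mathbb{R}^d$ is transversal to $0$; then $Z := H^{-1}(0)$ is a compact $1$-dimensional submanifold (or PL $1$-subcomplex, in the pseudomanifold case) of $M \times [0,1]$ with $\partial Z = h^{-1}(0) \times \{0\}$, since $g$ is nonvanishing at $t=1$.

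The free involution $A \times \mathrm{id}$ on $M \times [0,1]$ restricts to a free involution on $Z$ by antipodality of $H$. Each component of $Z$ is a circle or an arc, and an arc component cannot be self-invariant: any continuous order-$2$ self-homeomorphism of an interval has a fixed point, contradicting freeness. Hence the arc components pair up under the involution, so the total number of arcs is even, making $|h^{-1}(0)| = 2 \cdot \#\{\text{arcs}\}$ divisible by $4$, contradicting $|h^{-1}(0)| = 4k+2$. The contrapositive then gives the desired implication.

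The main obstacle will be making the transversality step and the compact $1$-complex structure of $Z$ precise in the pseudomanifold setting, where standard differential-topological machinery is unavailable. My plan is to work with an antipodal prism triangulation of $M \times [0,1]$ that refines the given simplicial structure on $M$, arrange $H$ to be linear on simplices with $0$ avoided on all faces of codimension greater than one in $M \times [0,1]$, so that $H^{-1}(0)$ becomes a PL $1$-complex whose components are arcs and circles; the involution argument then applies verbatim.
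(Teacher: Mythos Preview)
Your forward implication is exactly the paper's argument: factor through an equivariant $f:M\to\mathbb S^d$, invoke $\deg_2 f=1$ from the preceding theorem, and read off $|h^{-1}(0)|=4k+2$ from the pole preimages.

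Your reverse implication is correct but genuinely different from the paper's route. The paper does not spell out this direction separately; it is meant to follow from the two-way equivalence ``$M$ is $\but_d$ $\Leftrightarrow$ $\deg_2 f=1$ for equivariant $f:M\to\mathbb S^d$'' (Theorems~\ref{odddeg} and the one following it, resting on Property~8 of $\hind\!_2$ and Theorem~\ref{tEQUIV}). In that framework one would lift a given transversal-to-zero $h$ to an equivariant $f:M\to\mathbb S^d$ by pushing small balls around the zeros into opposite hemispheres, observe that $\deg_2 f$ equals the number of antipodal zero-pairs mod~$2$, and conclude via the index machinery. Your argument bypasses the cohomological index entirely: the linear homotopy to a nonvanishing antipodal map, equivariant transversality, and the elementary fact that a free involution on a compact $1$-manifold has an even number of arc components give $|h^{-1}(0)|\equiv 0\pmod 4$ directly. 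This is more self-contained and closer in spirit to the classical Pontryagin--Thom proofs of Borsuk--Ulam; the paper's approach, by contrast, integrates the result into the index-theoretic framework developed in Section~3.

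Your caveat about the pseudomanifold case is appropriate. Since $M$ is a \emph{closed} pseudomanifold, the prism $M\times[0,1]$ is again a pseudomanifold with boundary, so after an antipodal simplicial subdivision and a generic equivariant perturbation keeping $0$ off the $(d-1)$-skeleton, $H^{-1}(0)$ is indeed a disjoint union of PL arcs and circles, and your involution-pairing argument goes through unchanged.
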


Let $X$ and $K$ be spaces with free involutions and $X\subset Z$. Any map $f:Z\to K$ which is equivariant on $X$ is uniquely extended
in the obvious way to the equivariant map $\widetilde f:D(X,Z)\to K$.

In particular let $X_i$, $i=1,\,2$, be spaces with free involutions. Assume that $X_i\subset Z_i$, $i=1,\,2$, and $f:(Z_1, X_1)\to (Z_2, X_2)$ is
a map equivariant on $X_1$. Then $f$ induces the equivariant map $Df:D(X_1,Z_1)\to D(X_1,Z_2)$.

\medskip

\begin{theorem}
Let $M$ be a $d$--dimensional pseudomanifold with boundary $\partial M$ which is a $(d-1)$--dimensional pseudomanifold
with a free simplicial involution.
Suppose  $\partial M$ is a BUT--manifold, $N$ is a closed $d$--dimensional pseudomanifold with a free simplicial involution $A:N\to N$ and $f:M\to N$ a simplicial map equivariant on $\partial M$. For any $d$-simplex $\sigma\subset N$ the number of $d$-simplexes in $M$ mapped either on
$\sigma$ or on $A\sigma$ is odd.
\end{theorem}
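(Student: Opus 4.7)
The plan is to reduce the claim to the odd-degree Theorem~\ref{odddeg} by doubling $M$ along $\partial M$. Apply the camomile construction of Section~4 to form $DM:=D(\partial M,M)$, which is a closed $d$-dimensional pseudomanifold equipped with the free simplicial involution that swaps the two copies of $M$ and acts as $A$ on the common subcomplex $\partial M$. (Indeed, interior $(d-1)$-simplices of $M$ remain faces of exactly two $d$-simplices inside each copy, while each boundary $(d-1)$-simplex becomes a face of its unique $d$-simplex in each copy, totalling two; strong connectedness transfers through $\partial M$.) The equivariance of $f$ on $\partial M$ guarantees that the assignment $\widetilde f:=f$ on the first copy $M_1$ and $\widetilde f:=A\circ f$ on the second copy $M_2$ defines an equivariant simplicial extension $\widetilde f:DM\to N$.

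The central step is to show that $DM$ is a $\but_d$-space. I would invoke the camomile index bound (the proposition asserting $\hind_2 D(X,Z)\ge d$) with $Z=M$ and $X=\partial M$. The hypothesis $\hind_2\partial M\ge d-1$ holds because $\partial M$ is $\but_{d-1}$, which by Corollary~3.2 yields $\hind_2\partial M=d-1$. For the remaining hypothesis $i^{*}:H^{d-1}(M;\mathbb Z_2)\to H^{d-1}(\partial M;\mathbb Z_2)$ being zero, I would argue at the cochain level: given a $(d-1)$-cocycle $c$ on $M$, each $d$-simplex $\tau$ satisfies $\sum_{\rho\prec\tau}c(\rho)=0$ in $\mathbb Z_2$; summing over all $d$-simplices of $M$, every interior $(d-1)$-simplex is counted twice and every boundary $(d-1)$-simplex exactly once (by the pseudomanifold axiom), so $\sum_{\rho\subset\partial M}c(\rho)\equiv 0\pmod 2$. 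Since $\partial M$ is a closed connected $(d-1)$-pseudomanifold, $H^{d-1}(\partial M;\mathbb Z_2)\cong\mathbb Z_2$ is detected precisely by evaluation on the mod-$2$ fundamental class, i.e.\ by summing the cochain over all top-simplices; hence $i^{*}[c]=0$. The camomile bound then gives $\hind_2 DM\ge d$, and $\dim DM=d$ together with Corollary~3.2 upgrades this to $\tind DM=d$.

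Now the existence of the equivariant map $\widetilde f:DM\to N$ and monotonicity of the index (Property~1) force $\hind_2 N\ge\hind_2 DM=d$, so $N$ is itself $\but_d$ by Corollary~3.2. Theorem~\ref{odddeg} then applies to $\widetilde f$ between $\but_d$-pseudomanifolds and yields $\deg_2\widetilde f=1$. Translating via the simplicial description of mod-$2$ degree: for any $d$-simplex $\sigma\subset N$, the number of $d$-simplices of $DM$ that $\widetilde f$ maps onto $\sigma$ is odd. Splitting this count by the two copies of $M$ and using $\widetilde f|_{M_1}=f$, $\widetilde f|_{M_2}=A\circ f$, we see that it equals the number of $d$-simplices of $M$ mapped by $f$ onto $\sigma$ plus the number mapped by $f$ onto $A\sigma$, which is precisely the desired statement.

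The main obstacle is the triviality of $i^{*}$ on $(d-1)$-cohomology; the simplicial double-counting above handles it, but it is essential that $M$ is a pseudomanifold (so every $(d-1)$-simplex lies in one or two $d$-simplices) and that $\partial M$ is closed and connected (so that $H^{d-1}(\partial M;\mathbb Z_2)$ is detected by the mod-$2$ evaluation pairing). The rest of the argument is bookkeeping from the identification of $\widetilde f$ on the two copies of $M$.
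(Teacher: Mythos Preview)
Your proposal is correct and follows essentially the same route as the paper: double $M$ to $D(\partial M,M)$, observe that $\hind_2\partial M=d-1$ together with the triviality of $i^*$ in degree $d-1$ forces $\hind_2 D(\partial M,M)=d$, extend $f$ equivariantly to $\widetilde f$, and read off the conclusion from $\deg_2\widetilde f=1$. The paper appeals directly to Property~8 (in contrapositive form) rather than citing the camomile proposition and Theorem~\ref{odddeg}, but these are the same argument; your explicit cochain double-counting for $i^*=0$ fills in a step the paper only asserts.
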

\begin{proof} Since $\partial M$ is a BUT--manifold, we have $\hind\!_2 \partial M=d-1$. From the assumptions concerning $M$ and $\partial M$ it follows
that the inclusion $\partial M\to M$ induces the trivial homomorphism of $(d-1)$-dimensional cohomology groups with $\mathbb Z_2$ coefficients. Hence
the equivariant embedding $\partial M\to D(\partial M,M)$ also induces trivial homomorphism of $(d-1)$-dimensional cohomology groups. From
property~8 of index $\hind\!_2(\,\cdot\,)$ it follows that $\hind\!_2 D(\partial M,M)=d$. Note also that $D(\partial M,M)$ is a closed $d$-dimensional pseudomanifold.

Consider the extension $\widetilde f:D(\partial M,M)\to N$ of $f:M\to N$. Since $\widetilde f$ is equivariant we see that $\hind\!_2 N=d$.
From property~8 of index it follows that $\deg\!_2 \widetilde f=1$. Hence for any $d$-dimensional simplex $\sigma\subset N$ either
$f^{-1}(\sigma)$ or $f^{-1}(A\sigma)$ contains an odd number of simplices (not degenerating under $f$).
\end{proof}

As a partial case of this theorem we obtain for Tacker labeling the following Shashkin type result (see   \cite{MusS,ShashkinT,Shashkin99}) in which $\Pi_n$ denotes the set of labels $\{+1,-1,+2,-2,\ldots, +n,-n\}$.
\begin{theorem} Let $M$ be a $d$--dimensional pseudomanifold with boundary $\partial M$ which is a $(d-1)$--dimensional pseudomanifold with a free simplicial involution. Suppose that $\partial M$ is a BUT--manifold (i.e. $\tind \partial M=d-1$) and there is given a
$\{\pm 1,\dots,\pm(d+1)\}$-labeling
that is antipodal on the boundary $\partial M$ and has no complementary edges. Then for any set of  labels $\Lambda:=\{\ell_1,\ell_2,\ldots,\ell_{d+1}\}\subset \Pi_{d+1}$ with $|\ell_i|=i$ for all $i$, the number of $d$-simplexes that are labeled by one of $\Lambda$ or $(-\Lambda)$  is odd.
\end{theorem}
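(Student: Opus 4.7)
The plan is to reduce directly to the preceding theorem by taking $N$ to be the boundary of the $(d{+}1)$-dimensional cross-polytope and $f\colon M\to N$ the simplicial map determined by the labeling $L$. Precisely, I set $N=\partial\,\conv\{\pm e_1,\ldots,\pm e_{d+1}\}\subset\mathbb R^{d+1}$ with the antipodal involution $A(x)=-x$. With its natural simplicial structure, $N$ is a closed $d$-dimensional pseudomanifold (in fact $\mathbb S^d$) on which $A$ acts freely and simplicially, and its $d$-simplices are exactly the $(d{+}1)$-element subsets of the vertex set $\{\pm e_1,\ldots,\pm e_{d+1}\}$ that contain no antipodal pair. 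Labelling the vertex $\mathrm{sgn}(i)\,e_{|i|}$ by $i\in\Pi_{d+1}$ makes $A$ negate labels, and the label-set of a $d$-simplex of $N$ is precisely a set $\Lambda=\{\ell_1,\ldots,\ell_{d+1}\}$ with $|\ell_i|=i$ for all $i$; the antipodal $d$-simplex is labelled by $-\Lambda$.

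I would then define $f$ on $V({\cal T})$ by letting $f(v)$ be the vertex of $N$ labelled $L(v)$, and extend affinely over each simplex. This is a well-defined simplicial map, because by the no-complementary-edge hypothesis, for any simplex $[v_0,\ldots,v_k]$ of ${\cal T}$ the labels $\{L(v_0),\ldots,L(v_k)\}$ contain no antipodal pair and therefore span a simplex of $N$. Antipodality of $L$ on $\partial M$ translates immediately into equivariance of $f|_{\partial M}$, so all hypotheses of the preceding theorem are met.

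Applying that theorem to this $f$, for every $d$-simplex $\sigma\subset N$ the number of $d$-simplices of $M$ mapped without dimension drop onto $\sigma$ or onto $A\sigma$ is odd. Given $\Lambda$ as in the statement, the corresponding $d$-simplex $\sigma\subset N$ is hit by a $d$-simplex of $M$ precisely when the latter's $d{+}1$ vertex labels are exactly the elements of $\Lambda$, and hit by $A\sigma$ precisely when its labels form $-\Lambda$; any $d$-simplex of $M$ whose label multiset has a repetition drops dimension under $f$ and is therefore not counted in either case. This yields the claimed parity. I do not foresee any essential obstacle: the substantive work is already packed into the preceding theorem (through the camomile $D(\partial M,M)$ and the identity $\hind\!_2 D(\partial M,M)=d$), and the present argument is just the standard translation of a Tucker-type labeling into a simplicial map to the cross-polytope boundary.
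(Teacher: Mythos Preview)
Your proposal is correct and follows exactly the paper's approach: the paper's proof reads in its entirety ``In place of $N$ we take $d$-dimensional octahedral sphere. The labeling provides a map of $M$ to this sphere and we can apply the previous theorem.'' You have simply fleshed out the details of this argument.
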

\begin{proof} In place of $N$ we take $d$-dimensional octahedral sphere. The labeling provides a map of $M$ to this sphere and we can apply
the previous theorem.
\end{proof}

Similar argument provides the following theorem, a version of Shashkin's lemma for manifolds without boundaries.

\begin{corr} Let $M$ be a $d$--dimensional BUT-pseudomanifold (i.e. $\tind M=d$). Assume that there is given an equivariant
$\{\pm 1,\dots,\pm(d+1)\}$-labeling of the vertex set of $M$ and this labelling does not have complementary edges.
Then for any set of labels $\Lambda:=\{\ell_1,\ell_2,\ldots,\ell_{d+1}\}\subset\Pi_{d+1}$ with $|\ell_i|=i$ for all $i$, the number of $d$--simplexes in $T$ that are labelled by $\Lambda$  is odd.
\end{corr}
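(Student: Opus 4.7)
The plan is to interpret the labeling as an equivariant simplicial map of $M$ into a sphere and then apply Theorem \ref{odddeg} together with the simplicial description of mod $2$ degree recalled at the beginning of Section 5.

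Let $N$ denote the boundary of the $(d+1)$-dimensional cross-polytope, i.e.\ the simplicial $d$-sphere whose vertex set is $\{e_{\pm 1},\dots,e_{\pm (d+1)}\}\subset\mathbb R^{d+1}$ with $e_{-k}=-e_k$, and whose $d$-simplices are exactly the collections $\{e_{m_1},\dots,e_{m_{d+1}}\}$ with $|m_i|=i$ for all $i$. The antipodal involution on $\mathbb R^{d+1}$ restricts to a free simplicial involution on $N$, and $N\cong\mathbb S^d$ is a $\but_d$-space. The labeling $L$ extends linearly to an equivariant simplicial map $f\colon M\to N$ sending a vertex $v$ to $e_{L(v)}$: the no-complementary-edges hypothesis guarantees that no $1$-simplex of $M$ has both $e_k$ and $e_{-k}$ as vertex images, so $f$ carries each simplex of $M$ to a simplex of $N$, and equivariance of $L$ gives equivariance of $f$.

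Now $\tind M=d=\tind N$, so by Theorem \ref{odddeg} we have $\deg_2 f=1$. Recalling the simplicial definition of $\deg_2$ from the beginning of Section 5, this means that for every $d$-simplex $\sigma\subset N$, the number of $d$-simplices of $M$ which map onto $\sigma$ without dimension drop is odd. Apply this with $\sigma=\sigma_\Lambda\subset N$, the unique $d$-simplex with vertex set $\{e_{\ell_1},\dots,e_{\ell_{d+1}}\}$. Because the $|\ell_i|=i$ are pairwise distinct, a $d$-simplex $\tau\subset M$ is mapped non-degenerately onto $\sigma_\Lambda$ if and only if the multiset of labels of the vertices of $\tau$ is exactly $\Lambda$; hence the number of such $\tau$ is odd, which is the desired conclusion.

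The main obstacle, and essentially the only nontrivial point, is verifying that $f$ really is well defined and equivariant as a simplicial map; once that is in place the result is immediate from Theorem \ref{odddeg} and the simplicial interpretation of $\deg_2$. Note that one does not need to pair up $\Lambda$ with $-\Lambda$ as in the previous theorem, because here $M$ itself carries the full free involution, so $\sigma_\Lambda$ and $\sigma_{-\Lambda}=A\sigma_\Lambda$ are distinct $d$-simplices of $N$ to which one can apply the degree statement independently.
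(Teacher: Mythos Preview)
Your argument is correct and follows exactly the approach the paper indicates: map $M$ to the octahedral $d$-sphere via the labeling, invoke Theorem~\ref{odddeg} to get $\deg_2 f=1$, and read off the parity from the simplicial description of degree. The paper's own treatment is just the sentence ``Similar argument provides the following theorem,'' referring to the proof of the preceding bounded result; in the closed case no doubling is needed and the role of Property~8 is played directly by Theorem~\ref{odddeg}, precisely as you do.
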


\medskip

We finish this section with several more facts about $\but$-manifolds that are smooth or $PL$.

\medskip

Let us recall several  facts about ${\Bbb Z}_2$-cobordisms.
 We write ${\mathfrak N}_n$ for the group of unoriented cobordism classes of smooth $n$-dimensional manifolds.    Thom's cobordism theorem says that
the graded ring of cobordism classes  ${\mathfrak N}_*$ is
${\Bbb Z}_2[x_2, x_4, x_5, x_6, x_8, x_9,\ldots]$ with one generator $x_k$ in each degree $k$ not of the form $2^i - 1$.
Note that $x_{2k}=[{\Bbb R}{\Bbb P}^{2k}]$.

Let  ${\mathfrak N}_*({\Bbb Z}_2)$ denote the unoriented cobordism group of  free involutions. Then ${\mathfrak N}_*({\Bbb Z}_2)$ is a free ${\mathfrak N}_*$-module with basis $[{\Bbb S}^n,A]$, $n\ge0$, where $[{\Bbb S}^n,A]$ is the cobordism class of the antipodal involution on the $n$-sphere \cite[Theorem 23.2]{CF}. Thus,  each ${\Bbb Z}_2$-manifold can be uniquely represented in ${\mathfrak N}_n({\Bbb Z}_2)$  in the form:
$$
[M,A]=\sum\limits_{k=0}^n {[V^k][{\Bbb S}^{n-k},A]}.
$$

Recall also that a continuous mapping $h:M \to {\Bbb R}^n$ is called {\it transversal to zero} if there is an open set $U$ in ${\Bbb R}^n$ such that $U$ contains $0$, $U$ is homeomorphic to the open $n$-ball and $h^{-1}(U)$ consists of a finite number open sets in $M$ that are homeomorphic to open $n$-balls.

The following theorem is proved in \cite[Theorem 2]{Mus}.

\begin{theorem} \label{thmBUT} Let $M$ be a connected closed smooth $n$-dimensional manifold with a free smooth involution $A$. Then the following statements are equivalent:

\begin{enumerate}
\item $(M,A)$ is a $\but_n$-space.

\item $w_1^n(M/A)\ne0$ in $H^n(M/A;{\Bbb Z}_2)$, where $w_1(M/A)$ is 1-dimensional Stiefel--Whitney class of $\mathbb S^0$-bundle $M\to M/A$.

\item $M$ admits an antipodal continuous transversal to zeros mapping $h:M \to {\Bbb R}^n$ with the cardinality of zeros set $|h^{-1}(0)|=4k+2$, where $k\in {\Bbb Z}$.

\item For any antipodal map $f:M\to \mathbb S^n$ we have $\deg_2 f=1$.

\item $[M^n,A]=[{\Bbb S}^n,A]+[V^1][{\Bbb S}^{n-1},A]+\ldots+[V^n][{\Bbb S}^0,A]$ in ${\mathfrak N}_n({\Bbb Z}_2)$.
\end{enumerate}
\end{theorem}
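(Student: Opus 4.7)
The plan is to prove the equivalences in a hub-and-spokes pattern around statement~(1), exploiting the machinery developed earlier in the paper. Specifically, I will establish (1)$\Leftrightarrow$(2) through Yang's index, (1)$\Leftrightarrow$(4) through Property~8 together with Theorem~\ref{tEQUIV}, (1)$\Leftrightarrow$(3) by appealing to the theorem immediately preceding Theorem~\ref{thmBUT}, and (2)$\Leftrightarrow$(5) through the Conner--Floyd module decomposition combined with a characteristic-number identification.

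For (1)$\Leftrightarrow$(2), first observe that for a closed $n$-manifold with free involution one automatically has $\tind M\le n$ by obstruction theory (the sole obstruction to an equivariant map $M\to\mathbb S^n$ lies in $H^{n+1}(M/A;\pi_n\mathbb S^n)=0$) and $\hind\!_2 M\le n$ by Property~7. Corollary~3.1 then gives $\tind M=n\Leftrightarrow \hind\!_2 M=n$, so $(M,A)$ is BUT$_n$ iff $\hind\!_2 M=n$. By the second definition of $\hind\!_2$, this last condition reads $w^n(M)\ne 0$ in $H^n(M/A;\mathbb Z_2)$; and since $w(M)$ is identified with the first Stiefel--Whitney class of the line bundle $(M\times\mathbb R)/T\to M/A$ associated with the $\mathbb S^0$-bundle $M\to M/A$, this is exactly~(2).

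For (1)$\Leftrightarrow$(4), an equivariant $f:M\to\mathbb S^n$ always exists by the obstruction argument above. If $(M,A)$ is BUT$_n$ then $\hind\!_2 M=n=\hind\!_2\mathbb S^n$, and Property~8 forces $f^*\ne 0$ on top $\mathbb Z_2$-cohomology, i.e.\ $\deg_2 f=1$, for every such $f$. Conversely, if some antipodal $f$ has $\deg_2 f=1$, then Theorem~\ref{tEQUIV} applies with $Y=\mathbb S^n$ and $k=n$ (all four hypotheses being immediate), yielding $\hind\!_2 M=n$ and hence~(1). The equivalence (1)$\Leftrightarrow$(3) is already the content of the theorem proved immediately before Theorem~\ref{thmBUT}, via the construction $h=\pi\circ f$ with $\pi$ an orthogonal projection and $f:M\to\mathbb S^n$ an equivariant smooth map transverse to the poles.

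Finally, for (2)$\Leftrightarrow$(5), the Conner--Floyd decomposition $[M,A]=\sum_{k=0}^n[V^k][\mathbb S^{n-k},A]$ is unique over $\mathfrak N_*$, and (5) asserts that the leading coefficient $[V^0]\in\mathfrak N_0=\mathbb Z_2$ equals~$1$. This leading coefficient coincides with the Stiefel--Whitney number $\langle w_1^n(M/A),[M/A]_{\mathbb Z_2}\rangle$, which by Poincar\'e duality for the closed $n$-manifold $M/A$ is nonzero iff $w_1^n(M/A)\ne 0$. Thus (5)$\Leftrightarrow$(2). The main obstacle I anticipate is precisely this last identification of $[V^0]$ with a Stiefel--Whitney number, which requires unpacking Conner--Floyd's normal-bundle construction on the module $\mathfrak N_*(\mathbb Z_2)$. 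A cleaner workaround is to verify that $[V^0]\equiv\deg_2 f\pmod 2$ for any equivariant $f:M\to\mathbb S^n$, thereby reducing (5) directly to the already-established~(4) and sidestepping the characteristic-number computation altogether.
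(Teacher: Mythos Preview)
Your proposal is correct and follows essentially the same route as the paper. The paper's proof is a two-line summary: ``Equivalence of statements 1,2,3,4 was discussed above. Bordism invariance of Stiefel--Whitney numbers shows the equivalence of statements~2 and~5,'' and your hub-and-spokes argument is precisely an unpacking of those two sentences, invoking the same earlier results (Corollary~3.1, Property~8, Theorem~\ref{tEQUIV}, Theorem~\ref{odddeg}, and the transversality theorem just before Theorem~\ref{thmBUT}) for the first four equivalences, and the Conner--Floyd characteristic-number argument for the last. Your explicit identification $[V^0]=\langle w_1^n(M/A),[M/A]\rangle$ (via the vanishing of $w_1^n$ on each $V^k\times\mathbb{RP}^{n-k}$ for $k\ge 1$) is exactly what ``bordism invariance of Stiefel--Whitney numbers'' is pointing to, so there is no genuine difference in approach.
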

Equivalence of statements 1,2,3,4 was discussed above. Bordism invariance of Stiefel--Whitney numbers~\cite{CF} shows the equivalence of statements~2  and~5.

\medskip

In \cite[Theorem 3]{Mus} we considered a sufficient condition when an antipodal continuous mapping $f:M\to{\Bbb R}^n$ has non-empty zeros set $Z_f:={f^{-1}(0)}$. However, this condition is not necessary.

The following theorem is a version of Theorem \ref{tBUTS}, statement 5, for $BUT$--manifolds.

\begin{theorem} \label{strat} Let $M$ be a connected  $m$-dimensional, $m\ge n$, closed PL manifold with a free simplicial involution $A$. Then the following two statements are equivalent:

\begin{enumerate}
	
\item $(M,A)$ is a $\but_n$-space.

\item There are submanifolds  $M_1\subset M_2\subset\ldots\subset M_{n-1}\subset M_n=M$ such that for all $i$  $M_i$ is a closed smooth manifold of dimension $m-n+i$, $A(M_i)=M_i$, $M_i$ is a $\but_{i}$--manifold, and $M_{i+1}\setminus M_i$ consists of two connected components $C_1, C_2$ with $A(C_2)=C_1$.

\end{enumerate}	
\end{theorem}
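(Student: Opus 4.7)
The implication $(2) \Rightarrow (1)$ is immediate: in the prescribed filtration one has $M_n = M$, and by hypothesis $M_n$ is itself a $\but_n$-manifold, which is precisely statement $(1)$. The content of the theorem lies in the opposite implication, which I would prove by descending induction on $i\in\{n,n-1,\dots,1\}$. Set $M_n := M$, and, assuming inductively that one has $M_i\subset M$ closed, $A$-invariant, of dimension $m-n+i$, and a $\but_i$-manifold, the task is to construct $M_{i-1}\subset M_i$ of one dimension lower with the listed properties.

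To build $M_{i-1}$, I would take an equivariant PL function $\phi\colon M_i\to\mathbb{R}$ (that is, $\phi(Ax)=-\phi(x)$) that is transverse to $0$; such $\phi$ exists as a generic section of the twisted real line bundle $(M_i\times\mathbb{R})/A\to M_i/A$. Let $N:=\phi^{-1}(0)$, $U_+:=\phi^{-1}((0,\infty))$, and $U_-:=\phi^{-1}((-\infty,0))$; then $A$ swaps $U_+$ and $U_-$. If $U_+$ decomposes into components $P_1,\dots,P_k$ with $k\ge 2$, choose a PL arc $\gamma$ in $M_i$ joining $P_1$ to $P_2$ with $\gamma\cap A\gamma=\emptyset$ (general position), take an equivariant tubular neighbourhood of $\gamma\cup A\gamma$, and perturb $\phi$ equivariantly so that it becomes positive along the tube around $\gamma$ and negative along the tube around $A\gamma$; this merges $P_1$ with $P_2$ (and antipodally $AP_1$ with $AP_2$) while $0$ stays regular. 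Iterating reduces $U_+$ and $U_-$ to a single component each, and a further analogous equivariant $1$-surgery on $N$ along arcs in $\bar U_{\pm}$ makes $N$ itself connected. Put $M_{i-1}:=N$, $C_1:=U_+$, $C_2:=U_-$; the manifold structure, invariance, dimension, and two-component complement all follow.

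It remains to show that $(M_{i-1},A)$ is $\but_{i-1}$, which I would establish by contradiction. If $\tind M_{i-1}\le i-2$, there exists an equivariant map $g\colon M_{i-1}\to\mathbb{S}^{i-2}$. View $\mathbb{S}^{i-1}=D^{i-1}_+\cup_{\partial} D^{i-1}_-$ with equator $\mathbb{S}^{i-2}$. Since $D^{i-1}_+$ is contractible and the PL pair $(\bar C_1, M_{i-1})$ has the homotopy extension property, $g$ extends to $g_+\colon\bar C_1\to D^{i-1}_+$. Defining $g_-(x):=-g_+(Ax)$ on $\bar C_2$ yields a continuous extension that agrees with $g$ on $M_{i-1}$ by equivariance of $g$, so the pieces glue into an equivariant $\widetilde g\colon M_i\to\mathbb{S}^{i-1}$ -- contradicting $\tind M_i\ge i$. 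Hence $\tind M_{i-1}\ge i-1$, closing the induction. The main obstacle is the surgery step of paragraph~2: the extension argument of paragraph~3 goes through for any pairing of components of $M_i\setminus M_{i-1}$, but the theorem demands exactly two components swapped by $A$, so the equivariant tube/handle modifications of $\phi$ (and of $N$) are the one genuinely technical ingredient of the proof.
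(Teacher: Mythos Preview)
Your argument differs from the paper's in a substantive way. The paper constructs all the $M_i$ at once as preimages $M_i=h^{-1}(\mathbb{R}^i)$ of a flag $\mathbb{R}^1\subset\cdots\subset\mathbb{R}^n$ under a single ``regular'' antipodal map $h\colon M\to\mathbb{R}^n$, citing Lemmas~2 and~3 of \cite{Mus} for the construction; you instead build the filtration by descending induction, producing each $M_{i-1}$ from $M_i$ via an equivariant function $\phi\colon M_i\to\mathbb{R}$ and then performing surgery. Your $(2)\Rightarrow(1)$ is correct (and trivial as stated), and your extension argument in the last paragraph showing $\tind M_{i-1}\ge i-1$ is clean and correct --- it is essentially the recursive $Y_n$-characterization from Theorem~\ref{tBUTS}(5).

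The gap is in your connectivity management. Your tube surgery in step~(b), joining components of $U_+$ by arcs, requires $M_i$ to be connected; hence for the induction to continue you must arrange that the newly constructed $M_{i-1}=N$ is itself connected, which you attempt via ``a further analogous equivariant $1$-surgery on $N$''. But this is exactly what the paper leaves open as a \emph{Conjecture} stated immediately after the theorem. The difficulty is real: an equivariant handle pair attaches one tube inside $\bar U_+$ and its $A$-image inside $\bar U_-$; pushing the first tube across $N$ does connect two components of $N$, but the mirror tube on the other side carves out a new piece of $U_+$ separated from the old $U_+$ by $N$, so the two-component property of $M_i\setminus N$ established in step~(b) can be destroyed. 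Without a careful simultaneous control of the connectivity of $N$ and of $M_i\setminus N$, your induction does not close. The paper's global construction sidesteps this bootstrap problem by arranging the two-component condition at every level of the filtration in one stroke.
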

\begin{proof} Using the same arguments as in \cite[Section 2]{Mus} (see Lemmas 2 and 3)  we can prove that there are subspaces ${\Bbb R}^1\subset{\Bbb R}^2\subset\ldots\subset{\Bbb R}^{n-1}\subset{\Bbb R}^n$ and a regular antipodal mapping $h:M\to{\Bbb R}^n$ such that $M_i:=h^{-1}({\Bbb R}^i)$ is a manifold of dimension $m-n+i$ for all i. It is easy to see that these submanifolds satisfy all other properties.
On the other hand, it is obvious that statement 2 implies statement 1.
\end{proof}

\begin{remark} {\rm In smooth case such sequences can be obtained by the procedure similar to the definition of Smith's homomorphism in cobordism, see \cite{CF}. }
\end{remark}

This theorem immediately yields a classification of two-dimensional $\but_2$--manifolds. Namely, this class consists of orientable  two-dimensional  manifolds $M^2_g$ with even genus $g$ and non-orientable manifolds $P^2_m$ with even $m$, where $m$ is the number of  M\"obius bands.

\medskip

\noindent{\bf Conjecture.} All $M_i$ in Theorem \ref{strat} can be chosen connected.

\medskip

If this conjecture is correct then statement 3 in Theorem \ref{thmBUT} can be substituted by\\
{\it ``$M$ admits an antipodal continuous transversal to zeros mapping $h:M^n \to {\Bbb R}^n$ such that $h^{-1}(0)$ consists of two points.''}

%%%%%%%%%%%%%%%%%%%%%%%%%%%%%%%%%%%%%%%%%%%%%%%%%%%%%%%%%%%%%%%%%%%%%%%%%%%%%%%%%%%%%%%%%%%%%%%%%%%%%%%%%%%%%%%
%%%%%%%%%%%%%%%%%%%%%%%%%%%%%%%%%%%%%%%%%%%%%%%%%%%%%%%%%%%%%%%%%%%%%%%%%%%%%%%%%%%%%%%%%%%%%%%%%%%%%%%%%%%%%%%
\medskip

In forthcoming paper \cite{MusVo} we will discuss Tucker type results for $G$-spaces.

\medskip

\medskip

%\noindent{\bf Acknowledgment.} We  wish to thank Pavle Blagojevi\'c, Roman Karasev,  Ji\v{r}\'{i} Matou\v{s}ek,   and Fr\'ed\'eric Meunier  for helpful %discussions and  comments.

 \medskip

O. R. Musin, 
%IITP RAS, Bolshoy Karetny per. 19, Moscow, 127994, Russia\\
%and\\
University of Texas Rio Grande Valley, School of Mathematical and Statistical Sciences,  One West University Boulevard,  Brownsville, TX, 78520.\\
{\it E-mail address:} oleg.musin@utrgv.edu

 \medskip

A. Yu. Volovikov,  Department of Higher Mathematics, MSTU MIREA, Vernadskogo av., 78, Moscow, 119454, Russia\\
{\it E-mail address:} $\mbox{a\_volov@list.ru}$


\begin{thebibliography}{99}

\bibitem{Bacon}
P. Bacon, Equivalent formulations of the Borsuk-Ulam theorem, {\it Canad. J. Math.,} {\bf 18} (1966), 492--502.


%\bibitem{bloch2006}
%E. D.~Bloch.
%\newblock Mod 2 degree and a generalized no retraction theorem.
%\newblock {\em Math. Nachr.}, {\bf 279} (2006), 490--494.

\bibitem{Borsuk}
K. Borsuk,  Drei S\"atze \"uber die n--dimensionale euklidische Sph\"are, {\it Fund. Math.,} {\bf 20} (1933), 177--190.


\bibitem{CF60}
P. E. Conner and E. E. Floyd, Fixed points free involutions and equivariant maps, {\it Bull. Amer. Mat. Soc.,} {\bf 60} (1960), 416--441.

\bibitem{CF}
P. E. Conner and E. E. Floyd, Differentiable periodic maps, Springer-Verlag, 1964.

\bibitem{Gor}
L. Gorniewicz, Topological Fixed Point Theory of Multivalued Mappings,  Springer-Verlag, Second
edition, 2006

\bibitem{KyFan}
K. Fan, A generalization of Tucker's combinatorial lemma with topological applications. {\it Ann. of Math.,} {\bf 56} (1952), 431--437.

\bibitem{LS}
S. Lyusternik and S. Shnirel'man, S. Topological Methods in Variational Problems,  Issledowatelskii Institut Matematiki i Mechaniki pri O. M. G. U. (Moscow), 1930.

\bibitem{Mat}
J. Matou\v{s}ek, Using the Borsuk-Ulam theorem, Springer-Verlag, Berlin, 2003.

\bibitem{Mel}
S. A. Melikhov, The van Kampen obstruction and its relatives,  {\it Trudy Mat. Inst. Steklova} {\bf 266} (2009),
Geometriya, Topologiya i Matematicheskaya Fizika. II, 149--183; English
translation in {\it Proc. Steklov Inst. Math.} {\bf 266} (2009), no. 1, 142--176.


\bibitem{Mus}
O. R. Musin, Borsuk-Ulam type theorems for manifolds,  {\it Proc. Amer. Math. Soc.}  {\bf 140} (2012), 2551--2560.

\bibitem{MusSpT}
O. R. Musin, Extensions of Sperner and Tucker's lemma for manifolds, {\it J. of Combin. Theory Ser. A,} {\bf 132} (2015), 172--187.  


\bibitem{MusK}
O. R. Musin,  Kakutani type Borsuk-Ulam theorem, arXiv:1405.1579

\bibitem{MusArSp}
O. R. Musin, Around Sperner's lemma, arXiv:1405.7513.

\bibitem{MusS}
O. R. Musin, Generalizations of Tucker--Fan--Shashkin lemmas, arXiv:1409.8637

\bibitem{MusVo}
O. R. Musin and A.\,Yu. Volovikov, Tucker type lemmas for $G$-spaces, in preparation.

%\bibitem{RS}
%D. Repov\v s and A. Skopenkov, On projected embeddings and desuspending the  $\alpha$-invariant, {\it Topology and its Applications}, {\bf 124} (2002) 69--75.

\bibitem{Ste}
H. Steinlein, Borsuk's antipodal theorem and its generalizations and applications: A
survey, {\it M\'eth. Topol. en Anal. Non Lin\'eaire, S\'em. Math. Sup.}, {\bf 95} (1985), Presses Univ.
Montr\'eal, Montreal, Quebec, 166--235.

%\bibitem{Shashkin1}
%Yu. A. Shashkin, Local degrees of simplicial mappings, {\it Publ. Math. Debrecen}, {\bf 45} (1994), 407--413.


%\bibitem{Shashkin2}
%Yu. A. Shashkin, Remark on local degrees of simplicial mappings, {\it Publ. Math. Debrecen}, {\bf 49} (1996), 301--304.

\bibitem{ShashkinT}
Yu. A. Shashkin, Variants of Tucker's combinatorial lemma, {\it Proc. IMM UrB of RAS}, {\bf 4} (1996),  127--132 (in Russian).

\bibitem{Shashkin99}
Yu. A. Shashkin, Combinatorial lemmas and simplicial mappings, Ural State University press, Yekaterinburg, 1999 (in Russian).

\bibitem{Sv}
A. S. \v Svarc, The genus of a fiber space, {\it Trudy Moskov. Mat. Obsc.,} {\bf 10} (1961), 217-272 and {\bf 11} (1962), 99-126; (Russian), English translation in {\it Amer. Math. Soc. Translat.,} II. Ser., {\bf 55} (1966), 49--140.


%\bibitem{Sv2}
%A. S. Schwarz, My life in science, https://www.math.ucdavis.edu/~schwarz/bio.pdf


\bibitem{Tucker}
A. W. Tucker,  Some topological properties of the disk and sphere. In: Proc. of the First Canadian Math. Congress, Montreal, 285--309, 1945.

\bibitem{Vo00}
A.\,Yu. Volovikov, On the index of $G$-spaces, {\it Sbornik: Mathematics}, {\bf 191}, 9 (2000), 1259--1277.

\bibitem{Yang54}
C. T. Yang, On theorems of Borsuk-Ulam, Kakutani--Yamabe--Yujobo and Dyson, I,
{\it Ann. of Math.,} {\bf 60} (1954), 262--282.



\end{thebibliography}
\end{document}